\theoremstyle{plain}
\newtheorem{df}{Definition}[section]
\newtheorem{thm}{Theorem}[section]
\newtheorem{prop}{Proposition}[section]
\newtheorem{lem}{Lemma}[section]
\newtheorem{rem}{Remark}[section]
\newtheorem{ex}{Example}[section]
\newcommand{\Z}{\mathbb{Z}}
\newcommand{\C}{\mathbb{C}}
\newcommand{\R}{\mathbb{R}}
\begin{document}

\title{Toy models for D. H. Lehmer's conjecture}
\author{Eiichi Bannai and Tsuyoshi Miezaki}

\maketitle \vspace{-0.2in}
\begin{center}
Graduate School of Mathematics Kyushu University\\
Hakozaki 6-10-1 Higashi-ku, Fukuoka, 812-8581 Japan\\ \quad
\end{center} \vspace{0.1in}

\begin{quote}
{\small\bfseries Abstract.}

In 1947, Lehmer conjectured that the Ramanujan $\tau$-function $\tau (m)$ never vanishes for all positive integers $m$, where the $\tau (m)$ are the Fourier coefficients of the cusp form $\Delta _{24}$ of weight $12$. Lehmer verified the conjecture in 1947 for $m<214928639999$. In 1973, Serre verified up to $m<10^{15}$, and in 1999, Jordan and Kelly for $m<22689242781695999$. 

The theory of spherical $t$-design, and in particular those which are the shells of Euclidean lattices, is closely related to the theory of modular forms, as first shown by Venkov in 1984. In particular, Ramanujan's $\tau$-function gives the coefficients of a weighted theta series of the $E_{8}$-lattice. It is shown, by Venkov, de la Harpe, and Pache, that $\tau (m)=0$ is equivalent to the fact that the shell of norm $2m$ of the $E_{8}$-lattice is an $8$-design. So, Lehmer's conjecture is reformulated in terms of spherical $t$-design. 

Lehmer's conjecture is difficult to prove, and still remains open. In this paper, we consider toy models of Lehmer's conjecture. Namely, we show that the $m$-th Fourier coefficient of the weighted theta series of the $\mathbb{Z}^2$-lattice and the $A_{2}$-lattice does not vanish, when the shell of norm $m$ of those lattices is not the empty set. In other words, the spherical $5$ (resp. $7$)-design does not exist among the shells in the $\mathbb{Z}^2$-lattice (resp. $A_{2}$-lattice).

\noindent
{\small\bfseries Key Words and Phrases.}
weighted theta series, spherical $t$-design, modular forms, lattices, Hecke operator.\\ \vspace{-0.15in}

\noindent
2000 {\it Mathematics Subject Classification}. Primary 11F03; Secondary 05B30.\\ \quad
\end{quote}

\section{Introduction}                                   
The concept of a spherical $t$-design is due to Delsarte-Goethals-Seidel \cite{1}. For a positive integer $t$, a finite nonempty set X in the unit sphere
\[
S^{n-1} = \{x = (x_1, x_2, \cdots , x_n) \in \R ^{n}\ |\ x_1^{2}+ x_2^2+ \cdots + x_n^{2} = 1\}
\]
is called a spherical $t$-design in $S^{n-1}$ if the following condition is satisfied:
\[
\frac{1}{|X|}\sum_{x\in X}f(x)=\frac{1}{|S^{n-1}|}\int_{S^{n-1}}f(x)d\sigma (x), 
\]
for all polynomials $f(x) = f(x_1, x_2, \cdots ,x_n)$ of degree not exceeding $t$. Here, the righthand side means the surface integral on the sphere, and $|S^{n-1}|$ denotes the volume of the sphere $S^{n-1}$. The meaning of spherical $t$-designs is that the average value of the integral of any polynomial of degree up to $t$ on the sphere is replaced by the average value of a finite set on the sphere. 

Here, we denote by ${\rm {\rm Harm}}_{j}(\R^{n})$ the set of homogeneous polynomials on $\R^{n}$. It is well known that $X$ is a spherical $t$-design if and only if the condition 
\begin{align*}
\sum_{x\in X}P(x)=0,  \quad \forall P\in {\rm Harm}_{j}(\R^{n})
\end{align*}
holds for every integer $j$ with $1\leq j\leq t$. If the set $X$ is antipodal, that is $-X=X$, and $j$ is odd, then the above condition is fulfilled automatically. So we reformulate the condition of spherical $t$-design on the antipodal set as follows:
\begin{prop}
A nonempty finite antipodal subset $X\subset S^{n-1}_{m}$ is a spherical $2s+1$-design if the condition 
\begin{align*}
\sum_{x\in X}P(x)=0,  \quad \forall P\in {\rm Harm}_{j}(\R^{n})
\end{align*}
holds for every even integer $2j$ with $2\leq 2j\leq 2s$. 
\end{prop}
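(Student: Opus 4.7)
The plan is to reduce the full spherical $(2s+1)$-design condition, in its harmonic-polynomial form already recalled in the excerpt, to only those degrees that are not forced to vanish by antipodality. Concretely, I would invoke the criterion stated just before the proposition: $X \subset S^{n-1}_m$ is a spherical $(2s+1)$-design if and only if $\sum_{x \in X} P(x) = 0$ for every $P \in \mathrm{Harm}_j(\R^n)$ and every integer $j$ with $1 \le j \le 2s+1$. The task is therefore to eliminate the odd values of $j$ from this list.

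For this I would use the homogeneity of harmonic polynomials together with the hypothesis $-X = X$. If $P \in \mathrm{Harm}_j(\R^n)$ with $j$ odd, then $P(-x) = (-1)^j P(x) = -P(x)$, so pairing each $x \in X$ with its antipode $-x \in X$ (the map $x \mapsto -x$ is a fixed-point-free involution on $X$, since $X \subset S^{n-1}_m$ with $m > 0$) gives
\[
\sum_{x \in X} P(x) \;=\; \sum_{x \in X} P(-x) \;=\; -\sum_{x \in X} P(x),
\]
forcing $\sum_{x \in X} P(x) = 0$ automatically. Hence the design condition holds for free at every odd degree $j$.

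It remains to check it at the even degrees $j$ in the range $1 \le j \le 2s+1$, namely $j \in \{2, 4, \ldots, 2s\}$, which is exactly the hypothesis of the proposition written in the form $2 \le 2j \le 2s$. Combining this with the odd-degree vanishing from antipodality yields the full design condition, and the proposition follows.

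There is essentially no obstacle: the argument is a one-line parity observation on homogeneous polynomials, and the only thing to be slightly careful about is the bookkeeping of the index range (making sure that $2s+1$ being odd really does mean the largest even degree to check is $2s$, not $2s+1$).
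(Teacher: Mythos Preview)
Your proof is correct and follows exactly the reasoning the paper sketches: immediately before the proposition the paper notes that for antipodal $X$ and odd $j$ the harmonic sum vanishes automatically, and then states the proposition without further proof. You have simply filled in the one-line parity argument ($P(-x)=(-1)^jP(x)$ together with the bijection $x\mapsto -x$ on $X$) that the paper leaves implicit, so there is nothing to compare.
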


A lattice in $\R^{n}$ is a subset $\Lambda \subset \R^{n}$ with the property that there exists a basis $\{e_{1}, \cdots, e_{n}\}$ of $\R^{n}$ such that $\Lambda =\Z e_{1}\oplus \cdots \oplus\Z e_{n}$, i.e., $\Lambda $ consists of all integral linear combinations of the vectors $e_{1}, \cdots, e_{n}$. 
The dual lattice $\Lambda$ is the lattice
\begin{align*}
\Lambda^{\sharp}:=\{y\in \R^{n}\ |\ \langle y| x\rangle \in\Z , \ \forall x\in \Lambda\}. 
\end{align*}
In this paper, we assume that the lattice $\Lambda $ is integral, that is, $\langle x|y\rangle \in\Z$ for all $x$, $y\in \Lambda$. An integral lattice is called even if $\langle x| x\rangle \in 2\Z$ for all $x\in \Lambda$, and it is odd otherwise. An integral lattice is called unimodular if $\Lambda^{\sharp}=\Lambda$. 
For a lattice $\Lambda$ and a positive real number $m>0$, the shell of norm $m$ of $\Lambda$ is defined by 
\[
\Lambda_{m}:=\{x\in \Lambda\ |\ \langle x|x \rangle=m \}=\Lambda\cap (S^{n-1})_{m}.
\]

Let $\mathbb{H} :=\{z\in\C\ |\ \Im z >0\}$ be the upper half-plane. 
\begin{df}
Let $\Lambda$ be the lattice of $\R^{n}$. Then for a polynomial $P$, the function 
\begin{align*}
\Theta _{\Lambda, P} (z):=\sum_{x\in \Lambda}P(x)e^{i\pi z\langle x|x\rangle}
\end{align*}
is called the theta series of $\Lambda $ weighted by $P$. 
\end{df}
\begin{rem}[See Hecke \cite{12}, Schoeneberg \cite{13}, \cite{20}]
$\\$
{\rm (i)}
When $P=1$, we get the classical theta series 
\begin{align*}
\Theta _{\Lambda} (z)=\Theta _{\Lambda, 1} (z)=\sum_{m\ge 0}|\Lambda_{m}|q^{m},\ where\ q=e^{\pi i z}. 
\end{align*}
{\rm (ii)}
The weighted theta series can be written as 
\begin{align*}
\Theta _{\Lambda, P} (z)&=\sum_{x\in \Lambda}P(x)e^{i\pi z\langle x|x\rangle} \\
&=\sum_{m\geq 0}a^{(P)}_{m}q^{m},\ where\ a^{(P)}_{m}:=\sum_{x\in \Lambda_{m}}P(x). 
\end{align*}
\end{rem}

These weighted theta series have been used efficiently for the study of spherical designs which are the shells of Euclidean lattices. (See \cite{9}, \cite{19}, \cite{11}, \cite{2} and \cite{16}. See also \cite{10}.) 

\begin{lem}[cf. \cite{9}, \cite{19}, \cite{2}, Lemma 5]\label{lem:lempache}
Let $\Lambda$ be an integral lattice in $\R^{n}$. Then, for $m>0$, the non-empty shell $\Lambda_{m}$ is a spherical $t$-design if and only if 
\begin{align*}
a^{(P)}_{m}=0\ for\ every\ P\in {\rm Harm}_{2j}(\R^{n}), \ 1\leq 2j\leq t, 
\end{align*}
where $a^{(P)}_{m}$ are the Fourier coefficients of the weighted theta series 
\begin{align*}
\Theta _{\Lambda , P}(z)=\sum_{m\geq 0}a^{(P)}_{m}q^{m}. 
\end{align*}
\end{lem}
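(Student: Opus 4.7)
The plan is to reduce the $t$-design condition for $\Lambda_m$ (which lives on the sphere of radius $\sqrt{m}$, not the unit sphere) to a statement about sums of harmonic polynomials, and then invoke Proposition 1.1 to restrict to even degrees.

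First, I would pass from $\Lambda_m \subset S^{n-1}_{\sqrt{m}}$ to its rescaling $\tfrac{1}{\sqrt{m}}\Lambda_m \subset S^{n-1}$, noting that multiplication by a nonzero scalar commutes with evaluation of polynomials up to a constant factor and hence preserves the $t$-design property. The set $\Lambda_m$ is antipodal because $-\Lambda = \Lambda$, so by Proposition 1.1 it suffices to check that
\begin{align*}
\sum_{x\in \Lambda_m} P(x) = 0 \quad \text{for every } P \in {\rm Harm}_{2j}(\R^n), \ 1 \le 2j \le t,
\end{align*}
which is precisely the statement $a^{(P)}_m = 0$ for all such $P$.

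Next I would justify that it is enough to test polynomials in $\bigoplus_j {\rm Harm}_j(\R^n)$ rather than all polynomials of degree $\le t$. For this I would invoke the classical decomposition: every polynomial $f$ of degree $\le t$ on $\R^n$ can be written uniquely as
\begin{align*}
f(x) = \sum_{k,j \,:\, 2k+j \le t} \langle x|x\rangle^{k}\, H_{k,j}(x),
\end{align*}
with $H_{k,j} \in {\rm Harm}_j(\R^n)$. On the shell $\Lambda_m$ the norm $\langle x|x\rangle = m$ is constant, so summing $f$ over $\Lambda_m$ collapses to a linear combination of sums of harmonic polynomials of degree $\le t$, with coefficients $m^k$. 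Thus the integral condition defining a $t$-design holds for all $f$ of degree $\le t$ if and only if the harmonic sums vanish for $1 \le j \le t$.

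Finally, I would combine the two reductions: antipodality kills the odd-degree sums automatically, while the normalization to $S^{n-1}$ together with the harmonic decomposition reduces the $t$-design property to $a^{(P)}_m = 0$ for $P \in {\rm Harm}_{2j}(\R^n)$ with $1 \le 2j \le t$. There is no serious obstacle here; the only point requiring mild care is to handle the constant term (the $j=0$ contribution) correctly, namely observing that for harmonic polynomials of positive degree the spherical integral on the right-hand side of the $t$-design identity vanishes, so the design condition reduces exactly to the vanishing of the sums over $\Lambda_m$.
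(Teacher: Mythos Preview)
Your argument is correct: the rescaling to the unit sphere, the harmonic decomposition $f=\sum \langle x|x\rangle^k H_{k,j}$, the constancy of the norm on a shell, and the antipodality reduction via Proposition~1.1 together yield exactly the stated equivalence, and your remark about the $j=0$ term is the right way to tie the vanishing sums to the design identity.

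The paper itself does not supply a proof of this lemma; it is quoted as a known result with references to Venkov, de~la~Harpe--Pache, and Pache (the ``cf.'' and ``Lemma~5'' in the header point to those sources). So there is no in-paper argument to compare against. Your write-up is precisely the standard proof one finds in those references, and nothing is missing.
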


The theta series of $\Lambda $ weighted by $P$ is a modular form for some subgroup of $SL_{2}(\R)$. We recall the definition of the modular forms. 

\begin{df}
Let $\Gamma \subset SL_{2}(\R)$ be a Fuchsian group of the first kind and let $\chi$ be a character of $\Gamma$. A holomorphic function $f:\mathbb{H}\rightarrow \C$ is called a modular form of weight $k$ for $\Gamma$ with respect to $\chi$, if the following conditions are satisfied{\rm :} 
\begin{align*}
&{\rm (i)} \quad f\left(\frac{az+b}{cz+d}\right)=\left(\frac{cz+d}{\chi (\sigma)}\right)^{k}f(z)\quad for\ all\ \sigma =
\begin{pmatrix}
a & b\\
c & d
\end{pmatrix}
 \in \Gamma \\
&{\rm (ii)} \quad f(z)\ is\ holomorphic\ at\ every\ cusp\ of\ \Gamma. 
\end{align*}
\end{df}
If $f(z)$ has period $N$, then $f(z)$ has a Fourier expansion at infinity, \cite{3}:
\begin{align*}
f(z)=\sum_{m=0}^{\infty}a_{m} q_{N}^{m},\ q_{N}=e^{2 \pi i z /N}. 
\end{align*}
We remark that for $m<0$, $a_{m}=0$, by condition (ii). A modular form with constant term $a_{0}=0$, is called cusp form. 
We denote by $M_{k}(\Gamma , \chi)$ (resp. $S_{k}(\Gamma , \chi)$) the space of modular forms (resp. cusp forms) with respect to $\Gamma$ with the character $\chi$. 
When $f$ is the normalized eigenform of Hecke operators, p.163, \cite{3}, the Fourier coefficients satisfy the following relations:
\begin{lem}[cf. \cite{3}, Proposition 32, 37, 40, Exercise 2, p.164]\label{lem:lemrama}
Let $f(z)=\sum_{m\geq1}a(m)q^{m} \in S_{k}(\Gamma , \chi)$. 
If $f(z)$ is the normalized eigenform of Hecke operators, then the Fourier coefficients of $f(z)$ satisfy the following relations{\rm :}
\begin{align*}
a(mn)&=a(m)a(n) &(m, n \ {\rm coprime}) \\ 
a(p^{\alpha +1})&=a(p)a(p^{\alpha })-\chi(p) p^{k-1}a(p^{\alpha -1}) &(p {\rm \ a\  prime}). 
\end{align*}
\end{lem}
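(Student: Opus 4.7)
The plan is to prove the two relations by exploiting the Hecke algebra structure on $S_k(\Gamma,\chi)$ and then translating eigenvalue identities into coefficient identities. I will take as the basic analytic input the action of the Hecke operator $T_n$ on a Fourier expansion: for $f(z)=\sum_{m\geq 1} a(m) q^m$ one has
\begin{align*}
(f\mid T_n)(z) = \sum_{m\geq 1} b(m) q^m, \qquad b(m) = \sum_{d\mid \gcd(m,n)} \chi(d)\, d^{k-1}\, a\!\left(\tfrac{mn}{d^2}\right).
\end{align*}
This formula, derived from the standard coset decomposition of the double coset $\Gamma \sigma_n \Gamma$ with $\sigma_n=\operatorname{diag}(1,n)$, is the foundational computation I would invoke from the references.

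From this formula I would next derive the two fundamental Hecke algebra identities, namely $T_m T_n = T_{mn}$ whenever $\gcd(m,n)=1$, and $T_p T_{p^{\alpha}} = T_{p^{\alpha+1}} + \chi(p)\, p^{k-1}\, T_{p^{\alpha-1}}$ for a prime $p$ and $\alpha\geq 1$. Both can be proved either by a direct computation on Fourier expansions using the displayed formula for $b(m)$ (checking the identities coefficient-by-coefficient in $q^m$) or, more structurally, by composing the underlying coset representatives. Either verification is purely combinatorial once the action formula is in hand.

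Now suppose $f$ is a normalized Hecke eigenform, so $a(1)=1$ and $f\mid T_n = \lambda_n f$ for every $n\geq 1$. Reading off the coefficient of $q^1$ in both sides, the left side yields $b(1) = a(n)$ (the sum over $d\mid \gcd(1,n)$ has only the term $d=1$), while the right side yields $\lambda_n\, a(1) = \lambda_n$. Hence $\lambda_n = a(n)$, i.e.\ the Fourier coefficients of a normalized eigenform are exactly the Hecke eigenvalues. Translating the two operator identities into eigenvalue identities then gives precisely $a(mn) = a(m)a(n)$ for coprime $m,n$ and $a(p^{\alpha+1}) = a(p)\, a(p^{\alpha}) - \chi(p)\, p^{k-1}\, a(p^{\alpha-1})$.

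The main obstacle is not the final bookkeeping, which is essentially one line once the dictionary is set up, but the derivation of the explicit formula for $f\mid T_n$ and of the two Hecke algebra relations. These require a careful analysis of coset representatives for $\Gamma \backslash \Gamma \sigma_n \Gamma / \Gamma$ together with the transformation law encoded by $\chi$ and the weight $k$; done cleanly, however, this is exactly the content of the cited passages in Koblitz, so I would write the lemma as a consequence of those results rather than rederiving the coset calculus here.
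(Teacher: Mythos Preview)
Your proposal is correct and is precisely the standard Hecke-operator argument found in Koblitz. The paper itself offers no proof of this lemma at all: it is stated with the attribution ``cf.\ \cite{3}, Proposition 32, 37, 40, Exercise 2, p.164'' and used as a black box, so your outline in fact supplies strictly more detail than the paper does while arriving at the same place by the same route.
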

We set $f(z)=\sum_{m\geq1}a(m)q^{m} \in S_{k}(\Gamma , \chi)$. 
When $\dim S_{k}(\Gamma , \chi)=1$ and $a(1)=1$, then the $f(z)$ is the normalized eigenform of Hecke operators, \cite{3}. So, the coefficients of $f(z)$ have the relations as mentioned in Lemma \ref{lem:lemrama}. 
It is known that 
\begin{align}
|a(p)|<2p^{(k-1)/2} \label{eqn:Deligne}
\end{align}
for all primes $p$. 
Note that this is the Ramanujan conjecture and its generalization, called the Ramanujan-Petersson conjecture for cusp forms which are eigenforms for the Hecke operator. These conjectures were proved by Deligne as a consequence of his proof of the Weil conjectures, p.164, \cite{3}, \cite{14}. 
Moreover the following equation holds, \cite{8}. 
\begin{align}
a(p^{\alpha })=p^{(k-1)\alpha /2}\frac{\sin (\alpha +1)\theta _{p}}{\sin \theta _{p}}, \label{eqn:Lehmer} 
\end{align}
where $2 \cos \theta _{p} = a(p)p^{-(k-1)/2}$.

It is well known that the theta series of $\Lambda \subset \R^{n}$ weighted by harmonic polynomial $P\in {\rm Harm}_{j}(\R^{n})$ is a modular form of weight $n/2+j$ for some subgroup $\Gamma \subset SL_{2}(\R)$. In particular, when the $\deg (P)\geq 1$, the theta series of $\Lambda$ weighted by $P$ is a cusp form. 

For example, we consider the even unimodular lattice $\Lambda$. Then the theta series of $\Lambda $ weighted by harmonic polynomial $P$, $\Theta_{\Lambda, P}(z)$, is a modular form with respect to $SL_{2}(\Z)$. 

\begin{ex}
Let $\Lambda $ be the $E_{8}$-lattice. This is an even unimodular lattice of $\R^{8}$, generated by the $E_{8}$ root system. The theta series is as follows{\rm :}
\begin{align*}
\Theta_{\Lambda}(z)=E_{4}(z)&=1+240\sum_{m=1}^{\infty}\sigma _{3}(m) q^{2m} \\
&=1 + 240 q^2 + 2160 q^4 + 6720 q^6 + 17520 q^8 +\cdots, 
\end{align*}
where $\sigma _{3}(m)$ is a divisor function $\sigma _{3}(m)=\sum_{0<d|m}d^3$. 

For $j=2, 4$ and $6$, the theta series of $\Lambda $ weighted by $P$, $P\in {\rm Harm}_{j}(\R^{8})$ is a weight $6, 8$ and $10$ cusp form with respect to $SL_{2}(\Z)$. However, it is well known that for $k=6, 8$ and $10$, the $\dim S_{k}(SL_{2}(\Z))=0$, that is, $\Theta_{\Lambda , P}(z)=0$. Then by Lemma \ref{lem:lempache}, all the shells of $E_{8}$-lattice are spherical 7-design. 
\end{ex}

For $j=8$, the theta series of $\Lambda $ weighted by $P$ is a weight $12$ cusp form with respect to $SL_{2}(\Z)$. Such a cusp form is uniquely determined up to constant, i.e., is Ramanujan's delta function:
\begin{align*}
\Delta _{24}(z)=q^2\prod_{m\geq 1}(1-q^{2m})^{24}=\sum_{m\geq 1}\tau (z)q^{2m}. 
\end{align*}

The following proposition is due to Venkov, de la Harpe and Pache, \cite{11}, \cite{16}, \cite{2}, \cite{9}. 
\begin{prop}[cf. \cite{2}]\label{prop:proppache}
Let the notation be the same as above. Then the following are equivalent{\rm :} \\
$(\mathrm{i})$\quad $\tau (m)=0${\rm ;} \\ 
$(\mathrm{ii})$\quad $(\Lambda )_{2m}$ is an 8-design.
\end{prop}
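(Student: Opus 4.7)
The plan is to apply Lemma~\ref{lem:lempache}, which says $\Lambda_{2m}$ is a spherical $8$-design if and only if $a^{(P)}_{2m}=0$ for every $P\in{\rm Harm}_{2j}(\R^{8})$ with $1\leq 2j\leq 8$. Since every shell of $E_{8}$ is antipodal, only the even degrees $2j=2,4,6,8$ need to be inspected.

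For $2j=2,4,6$ the weighted theta series $\Theta_{\Lambda,P}$ is a cusp form for $SL_{2}(\Z)$ of weight $n/2+2j=6,8,10$, respectively. Using $\dim S_{6}(SL_{2}(\Z))=\dim S_{8}(SL_{2}(\Z))=\dim S_{10}(SL_{2}(\Z))=0$, recorded in the $E_{8}$ example above, I conclude $\Theta_{\Lambda,P}\equiv 0$, so $a^{(P)}_{2m}=0$ for every $m$ in these degrees. These degrees contribute nothing, and the problem reduces to the case $2j=8$.

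For $P\in{\rm Harm}_{8}(\R^{8})$ the weighted theta series lies in the one-dimensional space $S_{12}(SL_{2}(\Z))=\C\cdot\Delta_{24}$. Hence there is a scalar $c_{P}\in\C$ with $\Theta_{\Lambda,P}(z)=c_{P}\,\Delta_{24}(z)$, and comparison of $q^{2m}$-coefficients yields $a^{(P)}_{2m}=c_{P}\,\tau(m)$. This identity immediately gives (i)$\Rightarrow$(ii): if $\tau(m)=0$, then every $a^{(P)}_{2m}$ vanishes and Lemma~\ref{lem:lempache} declares $\Lambda_{2m}$ an $8$-design. For (ii)$\Rightarrow$(i), the $8$-design hypothesis forces $c_{P}\tau(m)=0$ for every $P\in{\rm Harm}_{8}(\R^{8})$, so I need only exhibit one $P_{0}$ with $c_{P_{0}}\neq 0$ to deduce $\tau(m)=0$.

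The main obstacle is thus the nonvanishing of the linear map $P\mapsto c_{P}$. Since $\Delta_{24}(z)=q^{2}+O(q^{4})$, the scalar $c_{P}$ coincides with $a^{(P)}_{2}=\sum_{x\in\Lambda_{2}}P(x)$, the sum over the $240$ $E_{8}$-roots. The obstacle therefore reduces to the finite-dimensional assertion that the root shell $\Lambda_{2}$ is \emph{not} itself a spherical $8$-design, or equivalently that some degree-$8$ harmonic has nonzero weighted sum on the $240$ roots. This is classical: $\Lambda_{2}$ is a tight spherical $7$-design and so cannot be an $8$-design; alternatively one can fix a concrete degree-$8$ zonal harmonic on $S^{7}$ and verify nonvanishing of its sum over the root directions by direct calculation. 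Either route produces the required $P_{0}$ and completes both implications.
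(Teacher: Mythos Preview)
Your argument is correct and follows exactly the line the paper sketches in the paragraphs preceding the proposition: Example~1.1 disposes of degrees $2j=2,4,6$ via $\dim S_{k}(SL_{2}(\Z))=0$ for $k=6,8,10$, and the paragraph on $j=8$ records that $\Theta_{\Lambda,P}\in S_{12}(SL_{2}(\Z))=\C\cdot\Delta_{24}$, whence $a^{(P)}_{2m}=c_{P}\tau(m)$. The paper itself does not supply a self-contained proof of the proposition, attributing it to Venkov, de~la~Harpe, and Pache; your write-up makes explicit the one point the paper leaves to the references, namely that the linear form $P\mapsto c_{P}$ is not identically zero (equivalently, that the $240$ roots do not form an $8$-design), and your tight-$7$-design argument for this is valid since an antipodal $8$-design would be a $9$-design, which is excluded by the Delsarte--Goethals--Seidel bound.
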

It is a famous conjecture of Lehmer that $\tau (m) \neq 0$. So, Proposition \ref{prop:proppache} gives a reformulation of Lehmer's conjecture. 
Lehmer proved in \cite{8} the following theorem. 

\begin{thm}[cf. \cite{8}]
Let $m_{0}$ be the least value of $m$ for which $\tau (m)=0$. Then $m_{0}$ is a prime. 
\end{thm}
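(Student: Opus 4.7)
The plan is to argue by contradiction: assume $m_{0}$ is composite, and show that either a strictly smaller vanishing index exists (contradicting minimality) or $\tau(p)$ cannot be an integer. First I would reduce to the prime-power case. Since $\dim S_{12}(SL_{2}(\Z))=1$ and $\Delta_{24}$ is the normalised eigenform, Lemma~\ref{lem:lemrama} gives $\tau(ab)=\tau(a)\tau(b)$ whenever $\gcd(a,b)=1$. If $m_{0}$ has two distinct prime divisors, factor $m_{0}=ab$ with $\gcd(a,b)=1$ and $1<a,b<m_{0}$; then $0=\tau(m_{0})=\tau(a)\tau(b)$ forces $\tau(a)=0$ or $\tau(b)=0$, contradicting minimality. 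Hence $m_{0}=p^{\alpha}$ with $p$ prime and $\alpha\geq 2$, and the remaining task is to rule out this configuration.

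For this I would apply the Lehmer-type formula (\ref{eqn:Lehmer}) with $k=12$: $\tau(p^{\alpha})=p^{11\alpha/2}\sin((\alpha+1)\theta_{p})/\sin\theta_{p}$, where $2\cos\theta_{p}=\tau(p)p^{-11/2}$. The Deligne bound (\ref{eqn:Deligne}) places $\theta_{p}\in(0,\pi)$, so $\tau(p^{\alpha})=0$ yields $\theta_{p}=k\pi/(\alpha+1)$ for some $1\leq k\leq\alpha$. Setting $d=\gcd(k,\alpha+1)$, $k=dk'$, $\alpha+1=d\beta$ with $\gcd(k',\beta)=1$, we find $\sin(\beta\theta_{p})=\sin(k'\pi)=0$, so $\tau(p^{\beta-1})=0$; when $d>1$ this gives $\beta-1<\alpha$, contradicting minimality. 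Therefore $\gcd(k,\alpha+1)=1$. Writing $\lambda=2\cos\theta_{p}$, the integrality of $\tau(p)$ implies $\lambda^{2}=\tau(p)^{2}/p^{11}\in\Q$, so $[\Q(\lambda):\Q]\leq 2$. But $\lambda=\zeta+\zeta^{-1}$ for a primitive $N$-th root of unity ($N=2(\alpha+1)$ if $k$ is odd, $N=\alpha+1$ if $k$ is even, the latter forcing $\alpha+1$ odd), so $[\Q(\lambda):\Q]=\phi(N)/2$. The constraint $\phi(N)\leq 4$ restricts $\alpha+1$ to $\{3,4,5,6\}$.

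The main obstacle is the case-by-case elimination of these four residual values. For $\alpha+1=3$ one has $\tau(p)^{2}=p^{11}$, impossible by the parity of the exponent. For $\alpha+1=5$ the four candidates $\lambda=(\pm 1\pm\sqrt{5})/2$ all satisfy $\lambda^{2}=(3\pm\sqrt{5})/2\notin\Q$, contradicting the preceding step. For $\alpha+1=4$, $\tau(p)^{2}=2p^{11}$ requires $2p$ to be a perfect square, forcing $p=2$ and $\tau(2)^{2}=4096$, i.e.\ $\tau(2)=\pm 64$; direct expansion of $\Delta_{24}=q^{2}\prod_{m\geq 1}(1-q^{2m})^{24}$ gives $\tau(2)=-24$, a contradiction. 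For $\alpha+1=6$, $\tau(p)^{2}=3p^{11}$ forces $p=3$ and $\tau(3)=\pm 729$, contradicting $\tau(3)=252$ obtained from the same product expansion. All cases lead to contradiction, so $m_{0}$ must be prime.
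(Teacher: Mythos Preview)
Your argument is correct. Note, however, that the paper does not supply its own proof of this theorem; it merely quotes Lehmer's 1947 result. The relevant comparison is therefore with Lehmer's original argument and with the paper's analogous Propositions~\ref{prop:3.1} and~\ref{prop:4.1}, which adapt Lehmer's method to the weight~$5$ and weight~$7$ settings.

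The overall architecture of your proof matches Lehmer's: reduce to a prime power via multiplicativity, use the formula $\tau(p^{\alpha})=p^{11\alpha/2}\sin((\alpha+1)\theta_p)/\sin\theta_p$ to force $\theta_p$ to be a rational multiple of $\pi$, and then exclude finitely many residual cases by explicit computation of $\tau(2)$ and $\tau(3)$. The one genuine difference is in how you pin down those residual cases. Lehmer (and the paper, in Propositions~\ref{prop:3.1} and~\ref{prop:4.1}) observes that $\lambda=2\cos\theta_p$ is an \emph{algebraic integer}; combined with $\lambda^2=\tau(p)^2/p^{11}\in\Q$, this gives $\lambda^2\in\Z$ immediately, so $\lambda^2\in\{1,2,3\}$ by the Deligne bound. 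You instead perform a gcd reduction to make $k$ and $\alpha_0+1$ coprime, identify $\lambda=\zeta+\zeta^{-1}$ for a primitive $N$-th root of unity, and bound $\phi(N)\leq 4$ from $[\Q(\lambda):\Q]\leq 2$. This is a legitimate alternative, but it costs you the extra (harmless) case $\alpha_0+1=5$, which the algebraic-integer route never sees because $(3\pm\sqrt{5})/2$ is not an integer. In the paper's own Propositions~\ref{prop:3.1} and~\ref{prop:4.1} the situation is simpler still: the weights $5$ and $7$ are odd, so $\lambda=a(p)p^{-(k-1)/2}$ is already rational, and one gets $\lambda\in\Z$ directly without passing through $\lambda^2$.
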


These are many attempts to study Lehmer's conjecture (\cite{8}, \cite{21}), but it is difficult to prove and it is still open. 

In this paper, we take the two cases $\Z^{2}$-lattice and $A_{2}$-lattice instead of $E_{8}$-lattice. Then, we consider the analogue of Lehmer's conjecture corresponding to the theta series weighted by some harmonic polynomial $P$. In Section $3$, we show that the $m$-th coefficient of the weighted theta series of $\Z^{2}$-lattice does not vanish when the shell of norm $m$ of those lattices is not an empty set. Or equivalently, we show the following result.
\begin{thm}\label{thm:3.1}
The shells in $\mathbb{Z}^{2}$-lattice are not spherical $5$-designs. 
\end{thm}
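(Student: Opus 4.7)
The plan is to combine Lemma~\ref{lem:lempache} with Proposition~1.1 to reduce the theorem to a nonvanishing statement for Fourier coefficients. Since every shell $(\Z^{2})_{m}$ is antipodal (in fact invariant under the four-fold rotation $\alpha\mapsto i\alpha$ of the Gaussian integers), showing that $(\Z^{2})_{m}$ fails to be a spherical $5$-design amounts to exhibiting a harmonic polynomial of degree $2$ or $4$ whose sum over the shell is nonzero. The rotation by $i$ multiplies $(x+iy)^{2}$ by $-1$, so every degree-$2$ harmonic sum automatically vanishes, and the whole problem collapses to the single polynomial $P(x,y):=(x+iy)^{4}$, whose real and imaginary parts span $\mathrm{Harm}_{4}(\R^{2})$.

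Next I would identify $\Theta_{\Z^{2},P}$ as a Hecke eigenform of CM type. Writing the shell sum in terms of Gaussian integers,
\begin{align*}
a^{(P)}_{m}=\sum_{\alpha\in\Z[i],\,|\alpha|^{2}=m}\alpha^{4},
\end{align*}
and grouping by the ideal $(\alpha)$---legal because $\alpha^{4}$ is invariant under $\Z[i]^{\times}=\{\pm 1,\pm i\}$---gives $a^{(P)}_{m}=4\sum_{N\mathfrak{a}=m}\psi(\mathfrak{a})$, where $\psi((\alpha)):=\alpha^{4}$ is a Gr\"ossencharacter on $\Q(i)$. Hence $\Theta_{\Z^{2},P}$ is a CM cusp form of weight $5$ and the coefficients $a^{(P)}_{m}$ are multiplicative. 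The local factors are immediate: for $p\equiv 3\pmod 4$ (inert) the factor is $p^{2\alpha}$ when $\alpha$ is even and $0$ otherwise; for $p=2$ (ramified) it is $(1+i)^{4\alpha}=(-4)^{\alpha}$; for $p\equiv 1\pmod 4$ split as $\pi\bar\pi$ with $\pi=a+bi$, formula~\eqref{eqn:Lehmer} gives $p^{2\alpha}\sin((\alpha+1)\theta_{p})/\sin\theta_{p}$, where $\cos\theta_{p}=(a^{4}-6a^{2}b^{2}+b^{4})/p^{2}$.

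Nonemptiness of $(\Z^{2})_{m}$ is exactly the condition that every prime $p\equiv 3\pmod 4$ divides $m$ to even order, so the inert and ramified factors above are visibly nonzero; what remains is to show that no split factor vanishes, i.e.\ $(\alpha+1)\theta_{p}\notin\pi\Z$ for all $\alpha\geq 0$ and all $p\equiv 1\pmod 4$. I expect this irrationality step to be the main obstacle. Since $\cos\theta_{p}\in\Q$, Niven's theorem reduces the problem to checking $\cos\theta_{p}\notin\{0,\pm\tfrac{1}{2},\pm 1\}$, and each forbidden value becomes an elementary diophantine condition on $(a,b)$ with $a^{2}+b^{2}=p$: $\cos\theta_{p}=1$ forces $ab=0$, $\cos\theta_{p}=-1$ forces $|a|=|b|$, and $\cos\theta_{p}\in\{0,\pm\tfrac{1}{2}\}$ forces $a^{2}/b^{2}$ to be a root of $t^{2}-6t+1$, $t^{2}-14t+1$, or $3t^{2}-10t+3$ respectively. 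None of these is compatible with $a^{2}+b^{2}$ being an odd prime congruent to $1\pmod 4$ (the first four equations have irrational roots, and the rational roots $3,1/3$ of the last would force $\sqrt{3}\in\Q$). Hence $\theta_{p}/\pi$ is irrational, $a^{(P)}_{m}\neq 0$ for every nonempty shell, and the theorem follows from Lemma~\ref{lem:lempache}.
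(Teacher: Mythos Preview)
Your argument is correct and takes a genuinely different route from the paper's. The paper treats $\Theta_{\Z^{2},P}$ abstractly as the unique normalized newform in $S_{5}(\Gamma_{0}(4),\chi_{4})$, then for $p\equiv 1\pmod 4$ runs Lehmer's original trick: if $a(p^{\alpha_{0}})=0$ with $\alpha_{0}$ minimal, then $2\cos\theta_{p}$ is simultaneously an algebraic integer and a root of $p^{4}z^{2}-a(p)^{2}=0$, forcing $a(p)=\pm p^{2}$; this last possibility is killed by a separate congruence $a(p)\equiv\sigma_{1}(p)\equiv 2\pmod 4$, obtained from theta identities. Deligne's bound \eqref{eqn:Deligne} is invoked to guarantee $|z|<2$.

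You instead exploit the CM structure of $\Z[i]$ to write $a(p)=\pi^{4}+\bar\pi^{4}$ explicitly, so that $\cos\theta_{p}=(a^{4}-6a^{2}b^{2}+b^{4})/p^{2}$ is visibly rational. Niven's theorem then reduces the irrationality of $\theta_{p}/\pi$ to five elementary diophantine checks, each of which you dispatch correctly. This buys you several things: the Ramanujan bound comes for free from $|\pi^{4}+\bar\pi^{4}|\le 2p^{2}$ (no Deligne needed), there is no separate mod~$4$ congruence lemma, and the argument handles all powers $p^{\alpha}$ at once rather than first reducing to $\alpha_{0}=1$. The paper's approach, on the other hand, is the template that one would hope to push toward non-CM situations such as $E_{8}$, where no explicit formula for $a(p)$ is available; your method is tied to the arithmetic of $\Q(i)$ but is cleaner and more self-contained for the case at hand.
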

Similarly, in Section $4$, we show the following result. 
\begin{thm}\label{thm:4.1}
The shells in $A_{2}$-lattice are not spherical $7$-designs. 
\end{thm}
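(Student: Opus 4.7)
The strategy parallels Theorem \ref{thm:3.1}. By the antipodal reformulation of spherical designs stated just before Lemma \ref{lem:lempache}, together with Lemma \ref{lem:lempache} itself, it suffices to exhibit, for each $m$ with $(A_2)_m\neq\emptyset$, some harmonic polynomial $P$ of even degree $2j\in\{2,4,6\}$ with $a^{(P)}_m\neq 0$. Identifying $\R^2\cong\C$, the complexified space ${\rm Harm}_j(\R^2)\otimes\C$ is spanned by $z^j$ and $\bar z^j$; since $A_2$ is preserved by multiplication by $\omega = e^{i\pi/3}$, under which $z^j\mapsto\omega^j z^j$, the sum $a^{(z^j)}_m$ vanishes identically whenever $6\nmid j$. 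The cases $j\in\{2,4\}$ are therefore automatic, and the problem reduces to analyzing the single polynomial $P(z) = z^6$.

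The series $\Theta_{A_2,P}(z)$ lies in $S_7(\Gamma_0(3),\chi_{-3})$, and a dimension count shows this space is $1$-dimensional. A direct computation of its minimum-shell Fourier coefficient shows $\Theta_{A_2,P}$ is non-zero, so it is a nonzero scalar multiple of the unique normalized Hecke eigenform $f(z) = \sum_{m\geq 1} a(m) q^m$. By Lemma \ref{lem:lemrama}, $a$ is multiplicative and satisfies $a(p^{\alpha+1}) = a(p)a(p^\alpha) - \chi_{-3}(p)\, p^6\, a(p^{\alpha-1})$.

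Next I carry out a prime-by-prime analysis. For $p=3$ one has $\chi_{-3}(3)=0$, so $a(3^\alpha) = a(3)^\alpha$; a direct count gives $a(3) = -27\neq 0$. For $p\equiv 2\pmod 3$ there are no lattice vectors of norm $p$, hence $a(p) = 0$, and the recursion then forces $a(p^{2k}) = p^{6k}$ and $a(p^{2k+1}) = 0$, so only even exponents of such primes appear in the support of $m$. Consequently, if $(A_2)_m\neq\emptyset$ and $a(m)=0$, multiplicativity forces $a(p^\alpha)=0$ for some prime $p\equiv 1\pmod 3$. For such $p$, $\chi_{-3}(p)=1$, so formula (\ref{eqn:Lehmer}) applies and yields $\theta_p = k\pi/(\alpha+1)$ for some $1\leq k\leq\alpha$. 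Since $2\cos\theta_p = a(p)/p^3$ must be rational, Niven's theorem on rational values of cosine combined with the strict Deligne bound (\ref{eqn:Deligne}) leaves only $a(p)\in\{0,\pm p^3\}$.

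The final and main obstacle is to rule out these three values when $p\equiv 1\pmod 3$. Write $p = \pi\bar\pi$ in $\Z[\omega]$; a direct computation gives $a(p) = \pi^6 + \bar\pi^6$. If $a(p) = 0$, then $\pi^6$ is pure imaginary, so $\pi^6 = c(1-2\omega)$ for some $c\in\Z$, forcing $3c^2 = p^6$, which is impossible. If $a(p) = \pm p^3$, then squaring together with $\pi^6\bar\pi^6 = p^6$ gives $\pi^{12} + \bar\pi^{12} = -p^6$ and $\pi^{12}\bar\pi^{12} = p^{12}$; solving this quadratic forces $\pi^{12}\in\{p^6\omega^2,\,p^6\omega^4\}$, and passing to ideals yields $(\pi)^{12} = (p)^6$, hence $(\pi)^2 = (p)$, which says $p$ ramifies in $\Z[\omega]$---contradicting $p\equiv 1\pmod 3$. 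This contradiction shows $a(m)\neq 0$ for every $m$ with $(A_2)_m\neq\emptyset$, so no non-empty shell of $A_2$ is a spherical $7$-design.
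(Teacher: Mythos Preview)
Your argument is correct and tracks the paper's proof through the multiplicative reduction and the cases $p=3$ and $p\equiv 2\pmod 3$; for $p\equiv 1\pmod 3$ you also reach the same trichotomy $a(p)\in\{0,\pm p^3\}$ via formula~(\ref{eqn:Lehmer}) and Deligne's bound (your appeal to Niven's theorem is equivalent to the paper's algebraic-integer argument). Where the two proofs genuinely diverge is in eliminating these three values. The paper establishes two congruences, $a(p)\equiv\sigma_5(p)\equiv 2\pmod 3$ (Lemma~\ref{lem:lemA2}) and $a(p)\equiv\sigma_1(p)\equiv 0\pmod 2$ (Lemma~\ref{lem:lemA3}), each verified by a Sturm-bound computation; the first kills $a(p)=0$ and $a(p)=p^3$, the second kills $a(p)=-p^3$. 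You instead exploit the CM structure: identifying $A_2$ with the Eisenstein integers and computing the weighted sum over the shell directly gives $a(p)=\pi^6+\bar\pi^6$, and then pure arithmetic in $\Z[\omega]$ (norms and unique factorisation of ideals) excludes all three values. Your route is more conceptual and sidesteps the auxiliary modular-form computations entirely; the paper's congruence method is more elementary in that it never names the Hecke character or uses splitting behaviour in $\Z[\omega]$, and it is the method more likely to generalise to non-CM situations such as the $E_8$ case that motivates the paper.
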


\section{Preliminaries}
First of all, we list up the famous modular forms needed later. The details of these functions appear in \cite{5}. We use $q=e^{\pi i z}$. 

\begin{align*}
\theta_{2}(z)&=\sum_{m\in \Z+1/2}q^{m^2}=2q^{1/4}(1+q^2+\cdots) &of\ weight\ 1/2, \\
\theta_{3}(z)&=\sum_{m\in \Z}q^{m^2}=1+2q+2q^4+\cdots     &of\ weight\ 1/2, \\
\theta_{4}(z)&=\sum_{m\in \Z}(-q)^{m^2}=1-2q+2q^4+\cdots   &of\ weight\ 1/2, \\
\eta (z)&=q^{1/12}\prod_{m\geq 1}(1-q^{2m})=q^{1/12}(1 - q^2 - q^4 + q^{10}+\cdots)   &of\ weight\ 1/2, \\
\Phi(z)&=\theta_{4}(z)^{4}-\theta_{2}(z)^{4}=1-24q+24q^{2}-96^{3}+\cdots   &of\ weight\ 2,  \hspace{11pt} \\
\Delta_{8}(z)&=\frac{1}{16}\theta_{2}(z)^4\theta_{4}(z)^4=q-8q^2+28q^3+\cdots   &of\ weight\ 4,  \hspace{11pt} \\
\Delta_{12}(z)&=\eta(z)^6\eta(3z)^6=q^2 - 6 q^4 + 9 q^6 + 4 q^8 +\cdots   &of\ weight\ 6.  \hspace{11pt}
\end{align*}

\subsection{The $\Z^{2}$-lattice}      
Let 
\begin{align*}
\Z ^{n}:=\{x=(x_1,\cdots,x_n)\in \R^{n}\ |\ x_{i} \in \Z, i=1,\cdots , n\} 
\end{align*}
be the cubic lattice of rank $n$. It is an odd unimodular lattice. The theta series of $\Z^{n}$ is $\Theta_{\Z ^{n}}(z)=\theta _{3}(z)^{n}$. For example, if we take $n=2$, then 
\begin{align*}
\Theta_{\Z ^{2}}(z)=\theta _{3}(z)^{2}&=\sum_{m=0}^{\infty}r_{2}(m)q^{m} \\
&= 1 + 4q + 4q^2 + 4q^4 + 8q^5 + 4q^8 + 4 q^9 +8 q^{10} + 8 q^{13}+\cdots, 
\end{align*}
where the coefficient $r_{2}(m)$ is the number of ways of writing $m$ as a sum of $2$ squares. 
\begin{lem}[cf. \cite{2}, Lemma 24]
We have 
\begin{align*}
\Theta_{\Z ^{n},P}=
\left\{
\begin{array}{lll}
\theta_{3}^{n} \quad &{\rm if }\ &P=1, \\
0 \quad &{\rm if}\ &P\in {\rm Harm}_{2}(\R^{n}), \\
c_{1}(P)\Delta_{8}\theta_{3}^{n} \quad &{\rm if}\ &P\in {\rm Harm}_{4}(\R^{n}), \\
c_{2}(P)\Phi \Delta_{8}\theta_{3}^{n} \quad &{\rm if}\ &P\in {\rm Harm}_{6}(\R^{n}), 
\end{array} 
\right.
\end{align*}
where $c_{1}$ is a nonzero linear form if and only if $n\geq 2$, and $c_{2}$ is a nonzero linear form if and only if $n\geq 3$. 
\end{lem}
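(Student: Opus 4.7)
My plan is to invoke Hecke's theorem on theta series with spherical harmonic coefficients, combined with an invariance argument under the symmetry group of $\Z^{n}$ and a dimension count for the relevant space of cusp forms. By Hecke's theorem, for $P\in{\rm Harm}_{j}(\R^{n})$ the function $\Theta_{\Z^{n},P}(z)$ is a modular form of weight $n/2+j$ for an appropriate congruence subgroup of $SL_{2}(\R)$ with an appropriate character. As soon as $j\geq 1$ this modular form is a cusp form, because $P$ is a non-constant homogeneous polynomial and hence $P(0)=0$, killing the constant term at $\infty$; vanishing at the other cusps is then obtained from the transformation law.

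Next I would exploit the fact that $\Z^{n}$ is stable under the hyperoctahedral group $B_{n}$ of signed permutations. For any $\sigma\in B_{n}$, the substitution $x\mapsto\sigma x$ gives $\Theta_{\Z^{n},P\circ\sigma}=\Theta_{\Z^{n},P}$, so averaging over $B_{n}$ shows that $\Theta_{\Z^{n},P}$ depends only on the $B_{n}$-invariant projection of $P$. Since the $B_{n}$-invariant polynomials are generated by the even power sums $\sum_{i}x_{i}^{2k}$, a direct calculation of the invariant harmonic polynomials shows: in degree $2$ the invariant harmonics vanish identically (the only invariant is proportional to $\sum x_{i}^{2}$, which is not harmonic); in degree $4$ they form a one-dimensional space iff $n\geq 2$; and in degree $6$ they form a one-dimensional space iff $n\geq 3$.

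The degree $2$ case immediately yields $\Theta_{\Z^{n},P}=0$. For degrees $4$ and $6$, the linear map $P\mapsto\Theta_{\Z^{n},P}$ factors through a space of dimension at most one, so it is determined up to a scalar by a single reference cusp form. I would check that $\Delta_{8}\theta_{3}^{n}$ is a cusp form of the correct weight $n/2+4$ (holomorphic at the cusps and vanishing at $\infty$ because $\Delta_{8}$ does), and similarly that $\Phi\Delta_{8}\theta_{3}^{n}$ is a cusp form of weight $n/2+6$. These identifications force $\Theta_{\Z^{n},P}=c_{1}(P)\Delta_{8}\theta_{3}^{n}$ and $\Theta_{\Z^{n},P}=c_{2}(P)\Phi\Delta_{8}\theta_{3}^{n}$ respectively, with $c_{1},c_{2}$ linear forms on ${\rm Harm}_{4}(\R^{n})$ and ${\rm Harm}_{6}(\R^{n})$.

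Finally I would establish non-triviality of $c_{1}$ and $c_{2}$ under the stated conditions by exhibiting an explicit invariant harmonic polynomial and comparing a single Fourier coefficient: for $n=2$, the polynomial $P(x_{1},x_{2})=x_{1}^{4}-6x_{1}^{2}x_{2}^{2}+x_{2}^{4}$ is $B_{2}$-invariant and harmonic, and direct summation of $P$ over $(\Z^{2})_{m}$ for the smallest $m$ with $(\Z^{2})_{m}\neq\emptyset$ produces a nonzero value, forcing $c_{1}\not\equiv 0$; an analogous degree $6$ polynomial works for $n\geq 3$. I expect the main obstacle to lie in pinning down the exact modular group and character so that the ambient space of cusp forms really has the predicted small dimension, and in organizing the invariant-harmonic calculation so that the $n$-dependent conditions $n\geq 2$ and $n\geq 3$ emerge cleanly rather than case by case.
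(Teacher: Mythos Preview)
The paper does not give its own proof of this lemma; it is simply quoted from Pache (reference~[12], Lemma~24). The closest thing to a proof in the paper is the analogous Lemma~2.2 for $A_{2}$, whose argument relies on the \emph{ring structure} of the relevant modular forms: one knows $\bigoplus_{s}M_{s}(\Gamma_{0}^{*}(3),\chi_{s})=\C[\Theta_{A_{2}},\Delta_{12}]$, so in each weight the cusp space is automatically at most one-dimensional and is spanned by the expected monomial. For $\Z^{n}$ the corresponding input (implicit in Pache and in the paper's later use of $\dim S_{5}(\Gamma_{0}(4),\chi_{4})=1$) is the structure of modular forms for the theta group $G(2)$, generated by $\theta_{3}^{2}$ and $\Delta_{8}$ together with $\Phi$; this immediately forces the cusp form of weight $n/2+4$ to be a multiple of $\Delta_{8}\theta_{3}^{n}$, and that of weight $n/2+6$ to be a multiple of $\Phi\Delta_{8}\theta_{3}^{n}$.

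Your route via $B_{n}$-invariance is a genuinely different way to see that the image of $P\mapsto\Theta_{\Z^{n},P}$ has rank at most one, and it has the pleasant feature of explaining the thresholds $n\geq 2$ and $n\geq 3$ directly through the dimensions of the $B_{n}$-invariant harmonics. However, there is a real gap at the step ``these identifications force $\Theta_{\Z^{n},P}=c_{1}(P)\Delta_{8}\theta_{3}^{n}$'': knowing only that the image is one-dimensional and that $\Delta_{8}\theta_{3}^{n}$ is \emph{some} cusp form of the right weight does not tell you the image lies on the line it spans. You yourself flag this as the main obstacle. To close it you must either (a) prove the ambient cusp space is one-dimensional---which is exactly the ring-structure argument the paper uses, rendering the $B_{n}$ step unnecessary---or (b) compute $\Theta_{\Z^{n},P_{0}}$ for your explicit invariant harmonic $P_{0}$ to enough Fourier coefficients to match it against $\Delta_{8}\theta_{3}^{n}$, which is doable but needs a separate argument that finitely many coefficients suffice (e.g.\ Sturm's bound). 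Either way, the missing ingredient is precisely the modular-forms dimension count that the paper's approach supplies from the outset.
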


By Lemma \ref{lem:lempache}, For $n\geq 2$, all the nonempty shells of $\Z^{n}$ are 
spherical 3-designs. We consider the case $n=2$. For $P\in {\rm Harm}_{4}(\R^{2})$, 
$\Theta_{\Z ^{2},P}=c_{1}(P)\Delta_{8}\theta_{3}^{2}=\sum_{m>0}a(m)q^{m}$.  

We set 
\begin{align*}
G(2):=
\left<
\begin{pmatrix}
1&2\\
0&1 
\end{pmatrix},\ 
\begin{pmatrix}
0&-1\\
1&0 
\end{pmatrix} 
\right>. 
\end{align*}

Then, the $\Delta_{8}$ and $\theta_{3}^{2}$ are the modular forms 
with respect to $G(2)$ with the character $\chi $, 
\begin{align*}
\left\{
\begin{array}{lll}
\chi \left(\begin{pmatrix}
1&2\\
0&1 
\end{pmatrix}
\right)=1 \\
\chi \left(\begin{pmatrix}
0&-1\\
1&0 
\end{pmatrix}
\right)=i, 
\end{array} 
\right.
\end{align*}
(cf. \cite{2}, Theorem 12, \cite{5}, p.187). 
Hence the $\Delta_{8}\theta_{3}^{2} \in S_{5}(G(2), \chi )$. 

Then it is easy to see that the following proposition holds. 

\begin{prop}[cf. \cite{2}]
Let the notation be the same as above. Then the following assertions are equivalent{\rm :} \\
$(\mathrm{i})$\quad $(\Z^{2})_{m}\neq \emptyset\ and\ a (m)=0${\rm ;} \\ 
$(\mathrm{ii})$\quad $(\Z^{2})_{m}$ is a 5-design.
\end{prop}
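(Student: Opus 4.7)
The plan is to chain Lemma~\ref{lem:lempache} with the explicit description of $\Theta_{\Z^{2},P}$ just recalled, so no new tool is needed.

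First, apply Lemma~\ref{lem:lempache} with $t=5$, $\Lambda=\Z^{2}$, $n=2$: the nonempty shell $(\Z^{2})_{m}$ is a spherical $5$-design if and only if $a^{(P)}_{m}=0$ for every $P\in{\rm Harm}_{2j}(\R^{2})$ with $2j\in\{2,4\}$. The degree-$2$ condition is vacuous, since the structural lemma recalled just above the proposition gives $\Theta_{\Z^{2},P}\equiv 0$ for all $P\in{\rm Harm}_{2}(\R^{2})$, so $a^{(P)}_{m}=0$ automatically in that case.

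For the degree-$4$ condition, the same structural lemma gives $\Theta_{\Z^{2},P}=c_{1}(P)\Delta_{8}\theta_{3}^{2}$ for every $P\in{\rm Harm}_{4}(\R^{2})$, hence $a^{(P)}_{m}=c_{1}(P)\,a(m)$, where $a(m)$ denotes the $m$-th Fourier coefficient of $\Delta_{8}\theta_{3}^{2}$. Asking that this vanish for every $P\in{\rm Harm}_{4}(\R^{2})$ is equivalent to $a(m)=0$, precisely because $c_{1}$ is a nonzero linear form on ${\rm Harm}_{4}(\R^{2})$ (guaranteed by the condition $n\geq 2$ in the structural lemma). Together with the standing requirement $(\Z^{2})_{m}\neq\emptyset$ built into the definition of a design, this will yield the equivalence (i)$\Leftrightarrow$(ii).

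There is no serious obstacle here: the proposition is a bookkeeping consequence of Lemma~\ref{lem:lempache} and the explicit shape of $\Theta_{\Z^{2},P}$. The only point demanding a moment of care is the passage from ``$c_{1}(P)\,a(m)=0$ for all $P\in{\rm Harm}_{4}(\R^{2})$'' to ``$a(m)=0$'', which uses the nonvanishing of the linear form $c_{1}$ on ${\rm Harm}_{4}(\R^{2})$ as asserted (but not reproved) in the structural lemma.
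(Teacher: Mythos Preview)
Your proposal is correct and is exactly the argument the paper intends: the paper does not write out a proof at all, merely saying ``it is easy to see that the following proposition holds,'' and your chaining of Lemma~\ref{lem:lempache} with the structural lemma on $\Theta_{\Z^{2},P}$ is precisely the easy verification being alluded to. The one subtlety you flagged---that $c_{1}$ is a nonzero linear form, so $c_{1}(P)a(m)=0$ for all $P$ forces $a(m)=0$---is indeed the only point requiring care, and you handled it correctly.
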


In Section \ref{sec:secZ2}, we will prove that $a (m)\neq 0$ if $(\Z^{2})_{m}\neq \emptyset$, hence also show the non existence of the spherical $5$-designs on the shells in $\Z^{2}$-lattice.

\subsection{The $A^{2}$-lattice}                               
Let 
\begin{align*}
A _{n}:=\{x=(x_0,\cdots,x_n)\in \Z^{n+1}\ |\ x_{0}+\cdots +x_{n}=0\} 
\end{align*}
be the $A_{n}$-lattice of rank $n$. It is an even lattice. When $n=2$, the theta series of $A_{2}$-lattice is 
\begin{align*}
\Theta_{A _{2}}(z)&=\theta _{3}(2z)\theta _{3}(6z)+\theta _{2}(2z)\theta _{2}(6z)\\
&=1 + 6 q^2 + 6 q^6 + 6 q^8 + 12 q^{14} + 6 q^{18}+6 q^{24} + 12 q^{26} + 6 q^{32} + 12 q^{38} +\cdots .
\end{align*}

\begin{lem}
We have 
\begin{align*}
\Theta_{A _{2},P}=
\left\{
\begin{array}{lll}
\Theta_{A_2} \quad &if\ P=1, \\
0 \quad &if\ P\in {\rm Harm}_{2}(\R^{2}), \\
0 \quad &if\ P\in {\rm Harm}_{4}(\R^{2}), \\
c_{1}(P)\Delta_{12}\Theta_{A_2} \quad &if\ P\in {\rm Harm}_{6}(\R^{2}), 
\end{array} 
\right.
\end{align*}
where $c_{1}$ is a nonzero linear form. 
\end{lem}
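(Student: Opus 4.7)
The case $P=1$ is immediate. For $P\in\mathrm{Harm}_{j}(\mathbb{R}^{2})$ with $j\geq 1$, the Hecke--Schoeneberg theta-transformation theory implies that $\Theta_{A_{2},P}$ is a cusp form of weight $1+j$ for a congruence subgroup $\Gamma$ with a character $\chi$ depending only on $A_{2}$, not on $P$. The key tool in my plan is the hexagonal symmetry of $A_{2}$. Identifying $\mathbb{R}^{2}\cong\mathbb{C}$ via $z=x+iy$, the space $\mathrm{Harm}_{j}(\mathbb{R}^{2})$ is spanned by $z^{j}$ and $\bar z^{j}$, and the rotation $\rho\colon z\mapsto e^{i\pi/3}z$ is an isometry of $A_{2}$ acting on these basis vectors by multiplication by $e^{\pm ij\pi/3}$. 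The substitution $x\mapsto\rho(x)$ in the defining sum for $\Theta_{A_{2},P}$ then yields $\Theta_{A_{2},z^{j}}=e^{ij\pi/3}\Theta_{A_{2},z^{j}}$ and likewise for $\bar z^{j}$. Since $e^{\pm ij\pi/3}\neq 1$ whenever $j\in\{2,4\}$, this forces $\Theta_{A_{2},P}=0$ on each basis vector, and hence on all of $\mathrm{Harm}_{j}(\mathbb{R}^{2})$ for $j=2$ and $j=4$.

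For $j=6$ the rotation argument is vacuous, so I instead use the reflection $\sigma\colon z\mapsto\bar z$, another isometry of $A_{2}$, which swaps $z^{6}$ and $\bar z^{6}$. The analogous change of variable gives $\Theta_{A_{2},P}=\Theta_{A_{2},P\circ\sigma}$, so $\Theta_{A_{2},P}$ vanishes on the one-dimensional antiinvariant subspace spanned by $z^{6}-\bar z^{6}$. Consequently the linear map $P\mapsto\Theta_{A_{2},P}$ on $\mathrm{Harm}_{6}(\mathbb{R}^{2})$ has image of dimension at most one. Since $\Delta_{12}\Theta_{A_{2}}$ is a nonzero weight-$7$ cusp form living in the same space $S_{7}(\Gamma,\chi)$, it suffices to identify this at-most-$1$-dimensional image with $\mathbb{C}\cdot\Delta_{12}\Theta_{A_{2}}$.

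To check that the image is in fact nonzero and proportional to $\Delta_{12}\Theta_{A_{2}}$, I would evaluate on the invariant generator $P_{0}=z^{6}+\bar z^{6}$. The six shortest vectors of $A_{2}$ form a regular hexagon of squared-length $2$, which (after choice of orientation) can be placed at angles $k\pi/3$, $k=0,\dots,5$, so each contributes $2\cdot 2^{3}\cos(6\cdot k\pi/3)=16$ to the $q^{2}$-coefficient; this gives $\Theta_{A_{2},P_{0}}=96\,q^{2}+\cdots$, whereas $\Delta_{12}\Theta_{A_{2}}=q^{2}+O(q^{4})$. So $c_{1}(P_{0})=96\neq 0$, and $c_{1}$ is a nonzero linear form. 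The main obstacle is the proportionality step itself: one must argue that both cusp forms span the same line in $S_{7}(\Gamma,\chi)$, either by establishing $\dim S_{7}(\Gamma,\chi)=1$ via the standard dimension formulas for congruence subgroups with character, or by showing that $\Theta_{A_{2},P_{0}}/(\Delta_{12}\Theta_{A_{2}})$ is a holomorphic modular function of weight zero and hence constant.
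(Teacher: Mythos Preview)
Your argument for the vanishing when $j=2,4$, via the hexagonal rotational symmetry of $A_2$, is correct and more elementary than the paper's route. The paper instead cites the structure theorem (Quebbemann) that $\bigoplus_{s\geq 0} M_s(\Gamma_0^{\ast}(3),\chi_s) = \mathbb{C}[\Theta_{A_2},\Delta_{12}]$, with generators of weights $1$ and $6$; this forces the cusp-form spaces in weights $3$ and $5$ to be zero and the one in weight $7$ to be the line $\mathbb{C}\,\Delta_{12}\Theta_{A_2}$, handling all four cases at once. Your symmetry argument bypasses any modular-forms machinery for the vanishing statements and for cutting the $j=6$ image down to a single line, but as you correctly flag, the identification of that line with $\mathbb{C}\,\Delta_{12}\Theta_{A_2}$ cannot come from lattice symmetries alone.

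That identification is the one place where input about the target space is unavoidable, and it is precisely what the paper supplies via the Quebbemann ring structure (equivalently, the paper later also invokes $\dim S_7(\Gamma_0(3),\chi_7)=1$ directly). So your plan is sound and your acknowledged obstacle is real but not fatal; finishing requires exactly the external fact the paper imports. For the nonvanishing of $c_1$, the paper does essentially what you do: it picks the explicit harmonic polynomial $P=(x^6-y^6)-15(x^4y^2-x^2y^4)$, which in your complex coordinates is $\tfrac{1}{2}(z^6+\bar z^6)$, and reads off a nonzero leading $q$-coefficient.
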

\begin{proof}
First, we define the Fricke group; for any prime p,
\begin{align*}
\Gamma_{0}^{*}(p):=\Gamma_{0}(p) \cup \Gamma_{0}(p)\, W_{p},\quad where\ 
W_{p}:=
\begin{pmatrix}
0        & -1/\sqrt{p} \\
\sqrt{p} & 0 
\end{pmatrix}. 
\end{align*}

Then, The theta series of $A_{2}$-lattice weighted by $P$ are the modular forms with respect to the Fricke group $\Gamma_{0}^{\ast}(3)$ with the character $\chi_{s}$, 
\begin{align*}
\left\{
\begin{array}{lll}
\chi_{s} (A)=\left( \frac{(-3)^{s}}{d}\right) \quad &if\ A=\begin{pmatrix}
a&b\\
c&d 
\end{pmatrix}
\in \Gamma_{0}(3) \\
\chi_{s} (W_{3}) =i^s, 
\end{array} 
\right.
\end{align*}
(cf. \cite{6}, Theorem 3.1). 
In fact, $\oplus_{s\geq 0}M_{s}(\Gamma_{0}^{\ast}(3), \chi_{s}) = \C[\Theta_{A_{2}}, \Delta _{12}]$, \cite{15}.

We take the $P\in {\rm Harm}_{6}(\R^{2})$, 
\begin{align*}
P(x)=(x^6-y^6)-15(x^4y^2-x^2y^4). 
\end{align*}
Then $\Theta _{A_{2}, P}(z)=4 q^{2}+\cdots$. So, $c_{1}$ is not identically zero. 
\end{proof}

By Lemma \ref{lem:lempache}, all the nonempty shells of $A_{n}$ are 
spherical 5-designs. For $P\in {\rm Harm}_{6}(\R^{3})$, 
$\Theta_{A_{2},P}=c_{1}(P)\Delta_{12}\Theta_{A_2}=:\sum_{m>0}a(m)q^{2m}$.  

Then it is easy to see that the following proposition holds. 
\begin{prop}
Let the notation be the same as above. Then the following assertions are equivalent{\rm :} \\
$(\mathrm{i})$\quad $(A_{2})_{m}\neq \emptyset\ and\ a (m)=0${\rm ;} \\ 
$(\mathrm{ii})$\quad $(A_{2})_{2m}$ is a 7-design.
\end{prop}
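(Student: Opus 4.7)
The plan is to read the equivalence directly off Lemma \ref{lem:lempache} combined with the preceding lemma on $\Theta_{A_2,P}$. Since $A_2$ is antipodal, Proposition 1.1 lets us check the $7$-design condition on $(A_2)_{2m}$ using only even-degree harmonics, namely $P\in\mathrm{Harm}_{2j}(\R^2)$ for $j=1,2,3$. By Lemma \ref{lem:lempache}, the shell $(A_2)_{2m}$ (assumed nonempty) is a spherical $7$-design if and only if $a^{(P)}_{2m}=0$ for every such $P$.

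Next I would plug in the vanishing statements from the preceding lemma. For $P\in\mathrm{Harm}_2(\R^2)$ and for $P\in\mathrm{Harm}_4(\R^2)$, the entire weighted theta series $\Theta_{A_2,P}$ is identically zero, so the conditions at $j=1$ and $j=2$ are automatic. The only content sits at $j=3$, where
\[
\Theta_{A_2,P}(z)=c_1(P)\,\Delta_{12}(z)\,\Theta_{A_2}(z)
\]
with $c_1$ a nonzero linear form on the two-dimensional space $\mathrm{Harm}_6(\R^2)$. Writing $\Delta_{12}\Theta_{A_2}=\sum_{m\geq 1}b(m)q^{2m}$ gives $a^{(P)}_{2m}=c_1(P)\,b(m)$. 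Requiring this to vanish for every $P\in\mathrm{Harm}_6(\R^2)$ is therefore equivalent to $b(m)=0$, because we may pick some $P_0$ with $c_1(P_0)\neq 0$; this is exactly the condition $a(m)=0$ in the normalization fixed just before the proposition.

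Putting the two reductions together yields the stated equivalence between (i) and (ii), with the nonemptiness hypothesis in (i) corresponding to the nonemptiness built into the very definition of a spherical design in (ii). There is no serious obstacle — the proposition is bookkeeping on top of Lemma \ref{lem:lempache} and the classification of $\Theta_{A_2,P}$ in the preceding lemma. The only subtlety worth flagging is the indexing convention: $a(m)$ collects the $q^{2m}$-coefficient and therefore weights the shell $(A_2)_{2m}$, which is the shell tested for being a $7$-design on the right-hand side of the equivalence.
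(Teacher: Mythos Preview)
Your argument is correct and is exactly the expansion the paper has in mind; the paper itself gives no proof beyond ``it is easy to see,'' and your use of Lemma~\ref{lem:lempache} together with the preceding classification of $\Theta_{A_2,P}$ is the intended reasoning. One cosmetic point: you write $b(m)$ for the coefficients of $\Delta_{12}\Theta_{A_2}$, but the paper reserves $b(m)$ for the coefficients of $\Delta_{12}$ alone, so you should stick with $a(m)$ throughout; also, as you correctly flag, the ``$(A_2)_m$'' in~(i) is evidently a typo for ``$(A_2)_{2m}$,'' since $a(m)$ weighs the shell of norm $2m$.
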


In Section \ref{sec:secA2}, we will prove that $a (m)\neq 0$ if $(A_{2})_{2m}\neq \emptyset$, hence also show the non existence of the spherical $7$-designs on the shells in $A_{2}$-lattice.

Finally, We collect the results needed later. 

\begin{prop}[cf. \cite{7}]\label{hir:hir}
Let $\sigma _{k}(m)$ be the divisor function $\sigma _{k}(m)=\sum_{0<d|m}d^{k}$. Then following equality holds{\rm :}
\begin{align*}
\frac{1}{16}\theta _{2}^4(z)=\sum_{m=1}^{\infty}\sigma _{1}(2m-1)q^{2m-1}. 
\end{align*}
\end{prop}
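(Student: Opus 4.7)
The plan is to derive this identity from two classical theta identities of Jacobi that are collected in \cite{5}: the relation $\theta_3^4 = \theta_2^4 + \theta_4^4$, and the four-square theorem, whose Fourier version reads
$$\theta_3(z)^4 = 1 + 8\sum_{n\geq 1} \sigma_1^{*}(n)\,q^n, \qquad \sigma_1^{*}(n) := \sum_{d\mid n,\ 4\nmid d} d.$$

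First I would obtain the Fourier expansion of $\theta_4^4$ for free by exploiting the symmetry $q\mapsto -q$. In the normalization $q = e^{\pi i z}$ used here, this corresponds to $z\mapsto z+1$, under which $\theta_3$ becomes $\theta_4$; substituting into the expansion above yields
$$\theta_4(z)^4 = 1 + 8\sum_{n\geq 1} (-1)^n \sigma_1^{*}(n)\,q^n.$$
Combining this with the Jacobi relation in the form $\theta_2^4 = \theta_3^4 - \theta_4^4$, the constant terms and the even-indexed Fourier terms cancel, while the odd-indexed terms double, producing
$$\theta_2(z)^4 = 16\sum_{\substack{n\geq 1\\ n\text{ odd}}} \sigma_1^{*}(n)\,q^n.$$

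The identification with the right-hand side is then immediate: for odd $n$, every divisor of $n$ is itself odd and therefore not divisible by $4$, so $\sigma_1^{*}(n) = \sigma_1(n)$; re-indexing $n = 2m-1$ and dividing by $16$ delivers the claim. I do not anticipate any serious obstacle: the only substantive ingredients are the two Jacobi identities, both of which are entirely standard. If one prefers to avoid citing them, an equivalent self-contained route is to observe that both sides of the proposed identity are weight-$2$ modular forms on a suitable congruence subgroup (for example $\Gamma_0(4)$, on which both transform appropriately), compute that they agree in finitely many initial Fourier coefficients, and invoke the finite-dimensionality of the ambient space together with the vanishing at the cusp $\infty$ to force equality.
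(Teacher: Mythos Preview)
Your argument is correct: subtracting the $q\mapsto -q$ twist of Jacobi's four-square expansion from itself via $\theta_2^4=\theta_3^4-\theta_4^4$ cleanly isolates the odd-indexed coefficients, and for odd $n$ the restricted divisor sum $\sigma_1^{*}(n)$ coincides with $\sigma_1(n)$. Each step is standard and the alternative modular-forms route you sketch would work just as well.

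There is nothing to compare against in the paper itself: the authors do not prove this proposition but merely quote it from Hirschhorn \cite{7}, so your write-up actually supplies an argument where the paper gives none.
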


\begin{prop}[cf. \cite{2}, p.127]\label{kob:kob}
Let $\chi$ be a Dirichlet character $\mod M$ and $\chi_{1}$ be a primitive Dirichlet character $\mod N$. If $f(z)=\sum_{n=0}^{\infty} a_{n}q^{n}\in M_{k}(\Gamma _{0}(M), \chi)$ and $f_{\chi_{1}}(z)=\sum_{n=0}^{\infty} a_{n}\chi_{1}(n)q^{n}$, then $f_{\chi_{1}}(z)\in M_{k}(\Gamma _{0}(MN^{2}), \chi \chi_{1}^{2})$. If $f$ is cusp form, then so is $f_{\chi_{1}}$. In particular, if $f(z)\in M_{k}(\Gamma_{0}(M))$ and $\chi_{1}$ is a quadratic {\rm(i.e.}, tales values $\pm 1${\rm )}, then $f_{\chi_{1}}(z)\in M_{k}(\Gamma_{0}(MN^{2}))$. 
\end{prop}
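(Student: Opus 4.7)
The plan is to realize the twisted series $f_{\chi_{1}}$ as an explicit finite linear combination of additive translates of $f$, and then deduce the modularity from that of $f$ itself. The primitivity of $\chi_{1}$ enters via the Gauss sum identity
\[
g(\bar{\chi}_{1})\,\chi_{1}(n) = \sum_{a\bmod N}\bar{\chi}_{1}(a)\, e^{2\pi i a n/N}\qquad(n\in\Z),
\]
which, after multiplication by $a_{n} q^{n}$ and summation in $n$, yields
\[
g(\bar{\chi}_{1})\, f_{\chi_{1}}(z) = \sum_{a\bmod N}\bar{\chi}_{1}(a)\, f\!\left(z+\tfrac{a}{N}\right).
\]
The task thus reduces to understanding how the finitely many translates $f(z+a/N)$ transform under a generic $\sigma\in\Gamma_{0}(MN^{2})$.

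For $\sigma=\begin{pmatrix}\alpha&\beta\\ c&d\end{pmatrix}\in\Gamma_{0}(MN^{2})$, so that $MN^{2}\mid c$, I factor
\[
\begin{pmatrix}1&a/N\\ 0&1\end{pmatrix}\sigma = \sigma'\begin{pmatrix}1&a'/N\\ 0&1\end{pmatrix}
\]
with $\sigma'=\begin{pmatrix}\alpha+ac/N & * \\ c & d-ca'/N\end{pmatrix}\in\Gamma_{0}(M)$ and $a'\equiv d^{2}a\pmod N$. The relation $a'\equiv d^{2}a$ is dictated by forcing the upper-right entry of $\sigma'$ to be integral and using the determinant condition $\alpha d\equiv 1\pmod N$ (which follows from $N\mid c$). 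The divisibility $MN^{2}\mid c$ ensures $\sigma'\in\mathrm{SL}_{2}(\Z)$, that $M\mid c$, and that $d-ca'/N\equiv d\pmod{M}$, so $\chi$ evaluated on the lower-right entry of $\sigma'$ equals $\chi(d)$. Applying the modularity of $f$ for $\sigma'$ and observing the cocycle simplification $c(z+a'/N)+(d-ca'/N)=cz+d$, I obtain
\[
g(\bar{\chi}_{1})\, f_{\chi_{1}}(\sigma z) = \chi(d)\,(cz+d)^{k}\sum_{a\bmod N}\bar{\chi}_{1}(a)\, f\!\left(z+\tfrac{a'}{N}\right).
\]
Since $a\mapsto a'\equiv d^{2}a\pmod N$ is a bijection of $\Z/N\Z$ (as $\gcd(d,N)=1$), reindexing the sum extracts the factor $\bar{\chi}_{1}(d^{-2})=\chi_{1}(d)^{2}$ and recovers $g(\bar{\chi}_{1})\,f_{\chi_{1}}(z)$, yielding the desired transformation law with character $\chi\chi_{1}^{2}$.

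Holomorphy of $f_{\chi_{1}}$ on $\mathbb H$ is immediate from the finite-sum representation. Holomorphy at every cusp of $\Gamma_{0}(MN^{2})$ follows because each translate $f(z+a/N)$ has a $q$-expansion at infinity with the same tail behaviour as $f$, and analogous statements hold at the remaining cusps via conjugation by suitable representatives; if $f$ is a cusp form, each translate vanishes at every cusp and so does $f_{\chi_{1}}$. The particular case is the specialization $\chi=1$ and $\chi_{1}$ quadratic, for which $\chi\chi_{1}^{2}=1$.

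The main obstacle is the matrix calculation in the second paragraph: it must be executed precisely enough to see both that $\sigma'$ actually lies in $\Gamma_{0}(M)$ (this is where the full strength of $N^{2}\mid c$, rather than merely $N\mid c$, is used) and that the substitution $a\mapsto a'\equiv d^{2}a$ is \emph{quadratic} in $d$, which is precisely what forces the twisted character to be $\chi\chi_{1}^{2}$ rather than $\chi\chi_{1}$ and explains the appearance of $N^{2}$ in the level.
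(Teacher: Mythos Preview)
The paper does not give a proof of this proposition; it is quoted from Koblitz's textbook (reference~\cite{3}, p.~127) and used as a black box. Your argument is correct and is in fact the standard proof one finds in that reference: express $f_{\chi_{1}}$ via the Gauss-sum identity as a finite sum of translates $f(z+a/N)$, then for $\sigma\in\Gamma_{0}(MN^{2})$ factor $T_{a}\sigma=\sigma' T_{a'}$ with $\sigma'\in\Gamma_{0}(M)$ and $a'\equiv d^{2}a\pmod N$, and reindex. Your matrix computation is accurate, including the observations that the full condition $MN^{2}\mid c$ (not merely $MN\mid c$) is what makes $\sigma'$ integral and keeps its lower-right entry congruent to $d$ modulo $M$, and that the quadratic substitution $a\mapsto d^{2}a$ is responsible for the character $\chi\chi_{1}^{2}$ and the level $MN^{2}$. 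The treatment of holomorphy at the cusps is brief but adequate: since $f_{\chi_{1}}$ is a finite linear combination of $f\vert_{k}T_{a}$ and each $T_{a}\in\mathrm{GL}_{2}^{+}(\Q)$, holomorphy (resp.\ vanishing) of $f$ at all cusps transfers directly.
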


\begin{thm}[cf. \cite{4}, Theorem 1]\label{stu:stu}
Let $f(z)$ and $g(z)$ be holomorphic modular forms of weight $k$ with respect to some congruence subgroup $\Gamma$ of $SL_{2}(\Z)$. If $f(z)$ and $g(z)$ have rational integer coefficients and there exists a prime $l$ such that 
\begin{align*}
Ord_{l}(f(z)-g(z))>\frac{k}{12}[SL_{2}(\Z):\Gamma], 
\end{align*}
then the $Ord_{l}(f(z)-g(z))=\infty$. {\rm(i.e.}, $f(z)\equiv g(z) \pmod l. ${\rm )} 
\end{thm}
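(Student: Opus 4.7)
The plan is to reduce the statement to the classical valence formula on the full modular group $SL_{2}(\Z)$. Put $h := f - g$, so $h \in M_{k}(\Gamma)$ has rational integer Fourier coefficients, and assume for contradiction that $\mathrm{Ord}_{l}(h) > \frac{k}{12}[SL_{2}(\Z):\Gamma]$ while $h \not\equiv 0 \pmod{l}$. The goal is to manufacture a modular form on $SL_{2}(\Z)$ whose order of vanishing at $\infty$ modulo $l$ exceeds the dimension-theoretic bound, producing a contradiction.

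The key construction is the \emph{norm} of $h$. Fixing coset representatives $\gamma_{1} = I, \gamma_{2}, \ldots, \gamma_{d}$ for $\Gamma \backslash SL_{2}(\Z)$ with $d = [SL_{2}(\Z):\Gamma]$, I would set
\[
F(z) := \prod_{i=1}^{d} (h\,|_{k}\,\gamma_{i})(z).
\]
A direct check shows $F\,|_{kd}\,\sigma = F$ for every $\sigma \in SL_{2}(\Z)$: right multiplication by $\sigma$ permutes the cosets $\Gamma\gamma_{i}$ up to a factor from $\Gamma$, and $h$ is $\Gamma$-invariant. Thus $F \in M_{kd}(SL_{2}(\Z))$. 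The factor $i=1$ equals $h$ itself, so the $q$-expansion of $F$ at $\infty$ vanishes to order at least $\mathrm{Ord}_{l}(h)$ mod $l$, and the remaining factors each contribute $l$-integral $q$-expansions with nonzero leading term mod $l$ (since $h \not\equiv 0 \pmod l$). Hence $F \not\equiv 0 \pmod l$ while $\mathrm{Ord}_{l}(F) > \frac{kd}{12}$.

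The final step is to invoke the valence formula for $SL_{2}(\Z)$: a nonzero holomorphic modular form of weight $kd$ has at most $\frac{kd}{12}$ zeros (with the usual $\frac{1}{2}$ and $\frac{1}{3}$ weights at the elliptic points) in a fundamental domain, and consequently $\mathrm{ord}_{\infty}(F) \leq \frac{kd}{12}$. To transfer this conclusion to the mod-$l$ setting one invokes the Serre--Swinnerton-Dyer theory of modular forms modulo $l$: the filtration weight of $\bar{F} := F \bmod l$ controls the vanishing order at $\infty$, and if $\bar F \neq 0$ then its filtration still obeys the valence bound. This forces $F \equiv 0 \pmod l$; unwinding the norm then yields $h \equiv 0 \pmod l$, contradicting our standing assumption.

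The main obstacle is integrality: each slash translate $h\,|_{k}\,\gamma_{i}$ a priori has coefficients in $\Z[\zeta_{N}]$ for some $N$ tied to the cusp widths of $\Gamma$, not in $\Z$. The clean way around this is to take an additional Galois norm from $\Z[\zeta_{N}]$ down to $\Z$ before reducing mod $l$; this multiplies the weight of the resulting form by $[\Q(\zeta_{N}):\Q]$, but the lower bound on $\mathrm{Ord}_{l}$ scales proportionally so the valence-formula inequality survives. A secondary subtlety is the mod-$l$ version of the valence formula itself, where for small primes ($l = 2, 3$) and congruence subgroups with elliptic points above $l$, one must be careful with the filtration: this is the technical heart of the argument and is where most of the real work lies.
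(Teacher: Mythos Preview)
The paper does not give its own proof of this theorem: it is stated as a cited result from Sturm's paper (reference~[18] in the bibliography, labeled \cite{4} in the source) and is used as a black box in the proofs of Lemmas~\ref{lem:lemA2} and~\ref{lem:lemA3}. There is therefore no proof in the paper to compare against.

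That said, your sketch is the standard Sturm argument and is essentially the one in the cited reference: form the norm $F=\prod_i h|_k\gamma_i\in M_{kd}(SL_2(\Z))$, control its $l$-adic vanishing at $\infty$, and appeal to the level-one Sturm bound (equivalently, Serre--Swinnerton-Dyer filtration theory). One point is glossed over: you assert that the factors $h|_k\gamma_i$ for $i\ge 2$ have ``nonzero leading term mod $l$'' because $h\not\equiv 0\pmod l$. This is not justified---those factors are the expansions of $h$ at the other cusps of $\Gamma$, and the hypothesis only controls the expansion at $\infty$. The correct logic runs the other way: once integrality is in hand, each factor contributes nonnegatively to $\mathrm{Ord}_L(F)$ (for $L$ a prime above $l$), so $\mathrm{Ord}_L(F)\ge\mathrm{Ord}_l(h)>kd/12$; the level-one bound then forces $F\equiv 0\pmod L$; since power series over the residue field form an integral domain, some factor $h|_k\gamma_i$ vanishes mod $L$, and an application of the $q$-expansion principle (or iterating the argument over all cusps) then forces $h\equiv 0\pmod l$. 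Your identification of the integrality obstacle and the Galois-norm fix is correct and is exactly how Sturm handles it.
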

\section{$\Z ^{2}$-lattice}\label{sec:secZ2}        
We recall the results: 
\begin{align*}
\Theta_{\Z ^{2}}(z)=\theta _{3}(z)^{2}&=\sum_{m=0}^{\infty}r_{2}(m)q^{m}. 
\end{align*}
\begin{lem}\label{lem:lem1}
Assume that $p$ is a prime. If $p \equiv 1 \pmod 4$, then $r_{2}(p)=8$. 
If $p \equiv 3 \pmod 4$, then 
\begin{align*}
r_{2}(p^n)=
\left\{
\begin{array}{lll}
0 \quad &{\rm if}\ &n\ is\ odd, \\
4 \quad &{\rm if}\ &n\ is\ even. 
\end{array} 
\right.
\end{align*}
\end{lem}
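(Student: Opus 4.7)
The plan is to reduce the lemma to the classical Jacobi formula
\[
r_{2}(n)=4\sum_{d\mid n}\chi_{-4}(d),
\]
where $\chi_{-4}$ is the non-trivial Dirichlet character modulo $4$, i.e.\ $\chi_{-4}(d)$ equals $0$, $1$, $-1$ according as $d$ is even, $d\equiv 1\pmod 4$, or $d\equiv 3\pmod 4$. This identity is standard and can be obtained either from the $q$-series identity for $\theta_{3}(z)^{2}$ (which is in the same family as Proposition \ref{hir:hir}), or conceptually by counting Gaussian integers of prescribed norm using unique factorization in $\Z[i]$. For the present paper, which freely uses such classical theta identities, I would cite Jacobi's formula as a black box.

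Granting this formula, both parts of the lemma reduce to a one-line computation of the character sum. If $p\equiv 1\pmod 4$, the only divisors of $p$ are $1$ and $p$, each contributing $+1$, so $r_{2}(p)=4(1+1)=8$. If $p\equiv 3\pmod 4$, the divisors of $p^{n}$ are $1,p,p^{2},\ldots,p^{n}$, with $\chi_{-4}(p^{k})=(-1)^{k}$; hence
\[
r_{2}(p^{n})=4\sum_{k=0}^{n}(-1)^{k},
\]
which is $0$ for $n$ odd and $4$ for $n$ even, as required.

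A second, essentially equivalent route avoids Jacobi entirely: identify $r_{2}(m)$ with $\#\{z\in\Z[i]:N(z)=m\}$. When $p\equiv 1\pmod 4$, $p$ splits as $\pi\bar\pi$, and the eight unit-associates $\pm\pi,\pm i\pi,\pm\bar\pi,\pm i\bar\pi$ exhaust all norm-$p$ elements (they are distinct since $\pi$ and $\bar\pi$ are non-associate). When $p\equiv 3\pmod 4$, $p$ is inert in $\Z[i]$, so any $z$ with $N(z)=p^{n}$ must be a unit times $p^{n/2}$; this forces $n$ to be even and then gives exactly four such elements.

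There is no real obstacle here: the result is textbook material, and the only choice is presentational. I would keep the argument short by invoking Jacobi's formula and performing the two divisor-sum computations above.
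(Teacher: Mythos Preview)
Your proof is correct and follows essentially the same route as the paper: the paper invokes the classical formula $r_{2}(m)=4(d_{1}(m)-d_{3}(m))$ (which is exactly your Jacobi identity $r_{2}(n)=4\sum_{d\mid n}\chi_{-4}(d)$ rewritten) and then declares the result an easy calculation. Your write-up merely makes that calculation explicit and adds the optional Gaussian-integer viewpoint.
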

\begin{proof}
Denote the number of divisors of $n$ with $d\equiv a \pmod 4$ by $d_{a}(n)$. It is well known that $r_{2}(m)=4(d_{1}(m)-d_{3}(m))$, \cite{5}. Then the results are easy calculation. 
\end{proof}
For $P \in {\rm Harm}_4(\R ^{2})$, 
\begin{align*}
\Theta_{\Z ^{2}, P}(z)&=c(p)\theta _{3}(z)^2\Delta_{8}(z) \\
&=c(p)(q - 4 q^2 + 16 q^4 - 14 q^5 - 64q^8 + 81 q^9+\cdots) \\
&=:c(p)\sum_{m\geq 1}a(m)q^m
\end{align*}
Then $\Theta_{\Z ^{2}, P}(z) \in S_{5}(G(2), \chi )$. 

Here, we difine the following function: 
\begin{align*}
\theta _{3}(2z)^2\Delta_{8}(2z)&=\sum_{m\geq 1}a(m)q^{2m} \\
&=q^2 - 4 q^4 + 16 q^8 - 14 q^{10} - 64q^{16} + 81 q^{18}+\cdots. 
\end{align*}
This is the modular form with respect to $\Gamma _{0}(4)$ 
with the character $\chi_{4}=(-1)^{(d-1)/2}$ 
(cf. \cite{3} Proposition 30, p.145). 

Because of the $\dim S_{5}(\Gamma_{0}(4), \chi_{4} )=1$, \cite{18}, and $a(1)=1$, by Lemma \ref{lem:lemrama}, the coefficients of $\Theta_{\Z ^{2}, P}(z)$ satisfied the following relations: 
\begin{align}
a(mn)&=a(m)a(n) &(m, n \ {\rm coprime}) \label{eqn:Z1} \\ 
a(p^{\alpha +1})&=a(p)a(p^{\alpha })-\chi_{4}(p) p^{4}a(p^{\alpha -1}). &(p {\rm \ a\  prime}) \label{eqn:Z2}
\end{align}
By the equation (\ref{eqn:Deligne}) and (\ref{eqn:Lehmer}), we get the following equations: 
\begin{align}
|a(p)|&<2p^{2} \label{eqn:Z5} \\ 
a(p^{\alpha })&=p^{2\alpha }\frac{\sin (\alpha +1)\theta _{p}}{\sin \theta _{p}}, \label{eqn:Z3}
\end{align}
where $2 \cos \theta _{p} = a(p)p^{-2}$. 

The coefficients $a(m)$ have the following crucial property. 
\begin{lem}\label{lem:lem2}
If $m$ is odd, then $a(m)\equiv \sigma_{1}(m) \pmod 4$, where $\sigma _{1}(m)$ is the divisor function $\sigma _{1}(m):=\sum_{0<d|m}d$. 
\end{lem}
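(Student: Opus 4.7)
The plan is to use the factorization $\Delta_{8} = \tfrac{1}{16}\theta_{2}^{4}\theta_{4}^{4}$ recorded in Section 2 and reduce modulo 4. Specifically, I will establish the formal power series identity
$$\theta_{3}^{2}\,\Delta_{8} \equiv \frac{1}{16}\theta_{2}^{4} \pmod{4}$$
in $\Z[[q]]$, whence the lemma will follow immediately from Proposition \ref{hir:hir}, which expresses $\tfrac{1}{16}\theta_{2}^{4}$ as $\sum_{m\geq 1}\sigma_{1}(2m-1)q^{2m-1}$, a series supported on odd exponents with precisely the desired coefficients.

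To establish the congruence, I would first observe that $\theta_{3}(z) = 1 + 2\sum_{n\geq 1}q^{n^{2}}$ and $\theta_{4}(z) = 1 + 2\sum_{n\geq 1}(-1)^{n}q^{n^{2}}$ are each of the form $1+2R$ with $R \in \Z[[q]]$. Squaring gives $\theta_{3}^{2} = 1 + 4R_{3} + 4R_{3}^{2} \equiv 1 \pmod{4}$ and likewise $\theta_{4}^{2} \equiv 1 \pmod{4}$; squaring the latter once more yields $\theta_{4}^{4} \equiv 1 \pmod{4}$. Writing $\theta_{3}^{2}\theta_{4}^{4} = 1 + 4F$ with $F \in \Z[[q]]$, one then obtains
$$\theta_{3}^{2}\,\Delta_{8} \;=\; \frac{1}{16}\theta_{2}^{4}\,(1+4F) \;=\; \frac{1}{16}\theta_{2}^{4} \;+\; 4\cdot\frac{1}{16}\theta_{2}^{4}\cdot F,$$
and since $\tfrac{1}{16}\theta_{2}^{4} \in \Z[[q]]$ by Proposition \ref{hir:hir}, the correction term lies in $4\Z[[q]]$. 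Reading off the coefficient of $q^{m}$ for odd $m$ yields $a(m) \equiv \sigma_{1}(m) \pmod{4}$, as required.

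The argument is entirely elementary once the multiplicative structure of $\Delta_{8}$ is exploited, and there is no real obstacle beyond recognizing that $\tfrac{1}{16}\theta_{2}^{4}$ has integer coefficients (guaranteed by Proposition \ref{hir:hir}). A more machinery-heavy alternative would be to invoke Sturm's theorem (Theorem \ref{stu:stu}) to reduce the congruence between two modular forms of matching weight on a common congruence subgroup to a finite coefficient verification, but the direct power-series manipulation sketched above is shorter, self-contained, and makes the appearance of $\sigma_{1}$ on the right-hand side fully transparent.
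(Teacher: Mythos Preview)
Your proposal is correct and follows essentially the same approach as the paper: the paper likewise writes $\theta_{3}^{2}\Delta_{8}=\tfrac{1}{16}\theta_{3}^{2}\theta_{4}^{4}\theta_{2}^{4}$, observes $\theta_{3}^{2}\equiv\theta_{4}^{4}\equiv 1\pmod 4$, and then invokes Proposition~\ref{hir:hir}. Your write-up is in fact a bit more careful than the paper's, since you make explicit why the correction term $4\cdot\tfrac{1}{16}\theta_{2}^{4}\cdot F$ actually lies in $4\Z[[q]]$ (this needs the integrality of $\tfrac{1}{16}\theta_{2}^{4}$, which the paper's one-line reduction leaves implicit).
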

\begin{proof}
Because of the $\theta _{3}(2z)^2\equiv \theta _{4}(2z)^4\equiv 1 \pmod 4$ and Proposition \ref{hir:hir}, 
\begin{align*}
\theta _{3}(2z)^2\Delta_{8}(2z)&=\frac{1}{16}\theta _{3}(2z)^2\theta _{4}(2z)^4\theta _{2}(2z)^4 \\
&\equiv \frac{1}{16}\theta _{2}(2z)^4 \pmod 4 \\
&\equiv \sum_{m=1}^{\infty} \sigma _{1}(2m-1)q^{2(2m-1)} \pmod 4. 
\end{align*}
\end{proof}

\begin{proof}[Proof of Theorem \ref{thm:3.1}]
We will show that the $a(m)\neq 0$ when $(\Z^{2})_{m}\neq \emptyset$. Assume that $m$ is a power of prime, if not we could apply (\ref{eqn:Z1}). We will divide into the three cases. \\
(i) Case $m=2^{\alpha}$: \\
We consider the equation (\ref{eqn:Z2}). 
\begin{align*}
a(2^{n+1})&=a(2)a(2^{n}). 
\end{align*}
Hence we have $a(2^{\alpha})\neq 0$, for $a(2)=-4$. \\
(ii) Case $m=p^{\alpha}$, $p\equiv 3 \pmod 4$: \\
By Lemma \ref{lem:lem1}, $a(p^n)=0$ if $n$ is odd number. Then, equation (\ref{eqn:Z2}) can be written as follows: 
\begin{align*}
a(p^{n+1})= p^4a(p^{n-1}). 
\end{align*}
Thus we get 
\begin{align*}
a(p^{n})= 
\left\{
\begin{array}{lll}
0 \quad &{\rm if}\ &n\ is\ odd, \\
p^{4(n-1)} \quad &{\rm if}\ &n\ is\ even. 
\end{array} 
\right.
\end{align*}
Hence we have $a(p^{\alpha})\neq 0$ when $(\Z^{2})_{m}\neq \emptyset$. \\
(iii) Case $m=p^{\alpha}$, $p\equiv 1 \pmod 4$: 
First of all, we show the following proposition. 
\begin{prop}\label{prop:3.1}
Let $\alpha_0$ be the least value of $\alpha$ for which $a{\rm (}p^{\alpha}{\rm )}=0$. Then $\alpha_0 =1$. 
\end{prop}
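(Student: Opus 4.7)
The plan is to combine the closed form (\ref{eqn:Z3}) for $a(p^{\alpha})$ with Deligne's bound (\ref{eqn:Z5}), Niven's theorem on rational values of cosine at rational multiples of $\pi$, and the mod~$4$ congruence of Lemma \ref{lem:lem2}, in order to force $a(p)=0$ whenever some $a(p^{\alpha})$ vanishes.

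First, I would observe that Deligne's bound gives $|\cos\theta_{p}|=|a(p)|/(2p^{2})<1$, so $\theta_{p}\in(0,\pi)$ and $\sin\theta_{p}\neq 0$. Consequently, by (\ref{eqn:Z3}) the vanishing $a(p^{\alpha_{0}})=0$ is equivalent to $\sin((\alpha_{0}+1)\theta_{p})=0$, which forces $\theta_{p}/\pi\in\mathbb{Q}$. Since $a(p)\in\mathbb{Z}$, the number $\cos\theta_{p}=a(p)/(2p^{2})$ is simultaneously rational. Invoking Niven's theorem (the only $r\in\mathbb{Q}$ for which $\cos(\pi r)\in\mathbb{Q}$ yield $\cos(\pi r)\in\{0,\pm\tfrac{1}{2},\pm 1\}$) therefore pins $a(p)$ to the short list $\{0,\pm p^{2},\pm 2p^{2}\}$. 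Deligne's bound immediately eliminates $\pm 2p^{2}$. Finally, since $p\equiv 1\pmod 4$ is odd, Lemma \ref{lem:lem2} gives
\[
a(p)\equiv \sigma_{1}(p)=1+p\equiv 2\pmod 4,
\]
so $a(p)$ is even, whereas $\pm p^{2}$ is odd; this rules out the remaining nonzero candidates. The only surviving possibility is $a(p)=0$, i.e., $\alpha_{0}=1$.

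The step I expect to be the main obstacle is recognising the relevance of Niven's theorem: without it, the vanishing of some $a(p^{\alpha_{0}})$ merely tells us that $\theta_{p}/\pi$ is rational, which by itself places no arithmetic constraint on $a(p)$. The key conceptual move is to notice that $\cos\theta_{p}$ is already rational (a triviality from $a(p)\in\mathbb{Z}$), so that the two rationality statements collapse $a(p)$ onto a five-element list. After that reduction the remainder of the argument is forced: Deligne handles the extremal values, and the mod~$4$ congruence from Lemma \ref{lem:lem2} is exactly strong enough, precisely in the residue class $p\equiv 1\pmod 4$, to contradict $a(p)=\pm p^{2}$ on parity grounds.
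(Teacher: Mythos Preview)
Your proof is correct and follows essentially the same path as the paper's: both use (\ref{eqn:Z3}) to deduce that $\theta_{p}/\pi$ is rational, combine this with the rationality of $\cos\theta_{p}=a(p)/(2p^{2})$ to pin $a(p)$ to a short list, and then eliminate the nonzero candidates via Deligne's bound and the congruence $a(p)\equiv 2\pmod 4$ from Lemma~\ref{lem:lem2}. The only cosmetic difference is that the paper invokes the fact that $2\cos\theta_{p}$ is an algebraic integer (hence a rational integer, being rational) and frames the argument by contradiction, whereas you cite Niven's theorem by name and argue directly; these are equivalent formulations of the same classical result.
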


\begin{proof}
Assuming the contrary, that is, $\alpha_0 > 1$, so that $a(p)\neq 0$. By (\ref{eqn:Z3}), 
\begin{align*}
a(p^{\alpha_0})=0=p^{2\alpha_0}\frac{\sin (\alpha +1)\theta _{p}}{\sin \theta _{p}}. 
\end{align*}
This shows that $\theta _{p}$ is a real number of the form
$\theta _{p}=\pi k / (1+\alpha_0)$, where $k$ is an integer. Now the number 
\begin{align}
z=2 \cos\theta _{p}=a(p) p^{-2}, \label{eqn:Z4}
\end{align}
being twice the cosine of a rational multiple $2 \pi$, is an algebraic integer. On the other hand $z$ is a root of the obviously irreducible quadratic 
\begin{align}
p^{4}z^{2}-a^{2}(p)=0.  \label{eqn:Z6}
\end{align}
Hence $z$ is a rational integer. By (\ref{eqn:Z5}) and (\ref{eqn:Z4}), we have $z^{2}\leq 3$. Therefore $z^{2}=1, 2$ and $3$. If z=2 (resp. 3), the quadratic (\ref{eqn:Z6}) become $a^{2}(2)= 2 p^{4}$ {\rm (resp.} $a^{2}(3)= 3 p^{4}$). These are impossible because the right hand sides are not square. 
If $z^{2}=1$, the quadratic (\ref{eqn:Z6}) become 
\begin{align}
a(p)=\pm p^{2}. \label{eqn:Z9} 
\end{align}
By Lemma \ref{lem:lem2}, we have 
\begin{align}
a(p)&\equiv \sigma_{1}(p) \pmod 4 \nonumber \\ 
&\equiv p+1 \pmod 4 \nonumber \\
&\equiv 2 \pmod 4 . \label{eqn:Z10}
\end{align}
However, if $p^{2}\equiv 1 \pmod 4$ then the equation (\ref{eqn:Z9}) becomes $a(p)=1$ or $3$. This is a contradiction. 
\end{proof}
So, it is enough to consider the case when $m$ is a prime. 
By (\ref{eqn:Z10}), $a(p)\equiv 2 \pmod 4$, so, we have $a(p)\neq 0$. This completes the proof of Theorem \ref{thm:3.1}. 
\end{proof}

\section{$A _{2}$-lattice}\label{sec:secA2}
We recall the results. 
\begin{align*}
\Theta_{A _{2}}(z)&=\theta _{3}(2z)\theta _{3}(6z)+\theta _{2}(2z)\theta _{2}(6z)\\
&=:\sum_{m=0}^{\infty}N(m)q^{2m}. 
\end{align*}
\begin{lem}[cf. \cite{5}, p.112]\label{lem:lemA1}
\begin{align*}
N(3^{\alpha})&=6, \hspace{45pt} for\ all\ a\geq 0, \\
N(p^{\alpha})&=6(\alpha +1), \hspace{12pt} for\ p \equiv 1 \pmod 3, \\
N(p^{\alpha})&=
\left\{
\begin{array}{lll}
0 \quad \hspace{15pt}&for\ p \equiv 2 \pmod 3, \alpha\ is\ odd, \\
6 \quad &for\ p \equiv 2 \pmod 3, \alpha\ is\ even.
\end{array} 
\right.
\end{align*}
\end{lem}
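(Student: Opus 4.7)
The plan is to identify $N(m)$ with a classical representation count and then reduce the prime-power cases to the splitting of rational primes in the ring of Eisenstein integers $\Z[\omega]$, where $\omega=e^{2\pi i/3}$.

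First I would change coordinates in $A_2=\{(x_0,x_1,x_2)\in\Z^3:x_0+x_1+x_2=0\}$ by eliminating $x_0$; the squared norm of a lattice point then equals $2(x_1^2+x_1x_2+x_2^2)$. Hence $N(m)$ equals the number of pairs $(x,y)\in\Z^2$ with $x^2+xy+y^2=m$, which is the same as the number of Eisenstein integers of absolute norm $m$ (after $y\mapsto -y$, since $N_{\Q(\omega)/\Q}(x+y\omega)=x^2-xy+y^2$). Because $\Z[\omega]$ is a principal ideal domain whose unit group has order $6$, this equals $6$ times the number of ideals of norm $m$ in $\Z[\omega]$, and by unique factorization of ideals the latter count is multiplicative in $m$.

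It therefore suffices to handle $m=p^\alpha$, and I would invoke the classical splitting law in $\Q(\omega)$: the prime $3$ ramifies as $(3)=(1-\omega)^2$; a prime $p\equiv 1\pmod 3$ splits as $\mathfrak p\bar{\mathfrak p}$ into two distinct prime ideals of norm $p$; and a prime $p\equiv 2\pmod 3$ is inert, so $(p)$ is itself a prime ideal of norm $p^2$. (One reads this off from the fact that $-3$ is a square modulo $p$ precisely when $p\equiv 1\pmod 3$.) The three cases of the lemma then drop out by counting ideals of a given norm: for $p=3$ the unique such ideal is $(1-\omega)^\alpha$, giving $N(3^\alpha)=6$; for $p\equiv 1\pmod 3$ the ideals are the $\alpha+1$ products $\mathfrak p^i\bar{\mathfrak p}^{\alpha-i}$ with $0\le i\le\alpha$, giving $N(p^\alpha)=6(\alpha+1)$; and for $p\equiv 2\pmod 3$ every such ideal is a power of $(p)$, which exists only when $\alpha$ is even, giving $N(p^\alpha)=6$ or $0$ accordingly.

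There is essentially no real obstacle here: the lemma is a classical consequence of unique factorization in $\Z[\omega]$ and could simply be quoted from the reference. An equivalent, purely elementary alternative would be to establish the closed form $N(m)=6\sum_{d\mid m}\chi_{-3}(d)$, where $\chi_{-3}$ is the non-trivial Dirichlet character modulo $3$, and then to evaluate this multiplicative divisor sum on each prime power; this reproduces the same three cases via a one-line computation in each case.
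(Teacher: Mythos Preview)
Your argument is correct and complete. The paper itself gives no proof of this lemma at all: it merely records the formulas with a pointer to Conway--Sloane, \emph{Sphere Packings, Lattices and Groups}, p.~112, and moves on. What you have written is precisely the classical derivation one would supply if asked to justify that citation---identify the $A_2$ norm form with the Eisenstein norm form, invoke unique factorization in $\Z[\omega]$ together with the six units, and read off the ideal count from the splitting type of $p$ in $\Q(\omega)$. The alternative you mention, evaluating the multiplicative divisor sum $N(m)=6\sum_{d\mid m}\chi_{-3}(d)$ on prime powers, is an equally standard repackaging of the same content and is in fact how the formula is often stated in the reference. There is nothing to correct.
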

For $P \in {\rm Harm}_6(\R ^{2})$, 
\begin{align*}
\Theta_{A _{2}, P}(z)&=c(p)\Theta _{A_{2}}(z)\Delta_{12}(z) =:\sum_{m\geq 1}a(m)q^{2m}, \\
\Delta_{12}(z)&=(\eta (z)\eta (3z))^{6}(z) =:\sum_{m\geq 1}b(m)q^{2m}, \\
E_{6}(z)&=1-504\sum_{m\geq 1}\sigma_{5}(m)q^{2m}, 
\end{align*}
where $E_{6}(z)$ is the Eisenstein series of weight $6$ with respect to the group $SL_{2}(\Z)$ and, $\sigma _{5}(m)$ is the divisor function $\sigma _{5}(m):=\sum_{0<d|m}d^{5}$. 
As we saw as above, $\Theta_{A _{2}, P}(z) \in S_{7}(\Gamma _{0}^{\ast}(3), \chi_{7})$, hence $\Theta_{A _{2}, P}(z) \in S_{7}(\Gamma _{0}(3), \chi_{7})$. 
Because of the $\dim S_{7}(\Gamma _{0}(3), \chi_{7})=1$, \cite{18}, and $a(1)=1$, by Lemma \ref{lem:lemrama}, the coefficients of $\Theta_{A _{2}, P}(z)$ satisfies the following relations: 
\begin{align}
a(mn)&=a(m)a(n) &(m, n \ {\rm coprime}) \label{eqn:A1} \\ 
a(p^{\alpha +1})&=a(p)a(p^{\alpha })-\chi(p) p^{6}a(p^{\alpha -1}). &(p {\rm \ a\  prime}) \label{eqn:A2}
\end{align}
By the equation (\ref{eqn:Deligne}) and (\ref{eqn:Lehmer}), we get the following equations: 
\begin{align}
|a(p)|&<2p^{3} \label{eqn:A3} \\ 
a(p^{\alpha })&=p^{3\alpha }\frac{\sin (\alpha +1)\theta _{p}}{\sin \theta _{p}}, \label{eqn:A4}
\end{align}
where $2 \cos \theta _{p} = a(p)p^{-3}$. 

The following lemma is useful later. 
\begin{lem}\label{lem:lemA2}
For $m\not\equiv 0 \pmod 3$, $a(m)\equiv \sigma_{5}(m) \pmod 3$. 
\end{lem}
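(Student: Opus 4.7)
The plan is to reduce to a congruence for the coefficients $b(m)$ of $\Delta_{12}(z)=\eta(z)^{6}\eta(3z)^{6}$, then prove it by comparing with a ``prime-to-$3$'' Eisenstein series via iterated Sturm's theorem. First, I would use Lemma \ref{lem:lemA1}: every nonzero coefficient $N(m)$ of $\Theta_{A_{2}}(z)$ is a multiple of $6$, so $\Theta_{A_{2}}(z)\equiv 1\pmod 3$. Since $c(P)=1$ is forced by $a(1)=1$, this yields $a(m)\equiv b(m)\pmod 3$, and the lemma reduces to showing $b(m)\equiv \sigma_{5}(m)\pmod 3$ for $\gcd(m,3)=1$. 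The companion fact $b(m)\equiv 0\pmod 3$ for $3\mid m$ follows from the Frobenius identity $(1-X)^{3}\equiv 1-X^{3}\pmod 3$ applied to $\eta(z)^{3}=q^{1/4}\prod_{n\geq 1}(1-q^{2n})^{3}$: it gives $\eta(z)^{3}\equiv \eta(3z)\pmod 3$, hence $\Delta_{12}(z)\equiv \eta(3z)^{8}=q^{2}\prod_{n\geq 1}(1-q^{6n})^{8}\pmod 3$, a series supported only on $q^{2m}$ with $m\equiv 1\pmod 3$.

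Next, I would construct the ``prime-to-$3$'' weight-$6$ modular form
\[
F(z)\;:=\;E_{6}(z)-V_{3}U_{3}E_{6}(z)\;=\;-504\sum_{\gcd(m,3)=1}\sigma_{5}(m)\,q^{2m}\;\in\;M_{6}(\Gamma_{0}(9)),
\]
and set $h(z):=504\,\Delta_{12}(z)+F(z)\in M_{6}(\Gamma_{0}(9))$. Because $\gcd(504,27)=9$, the combined claim ``$b(m)\equiv \sigma_{5}(m)\pmod 3$ for $\gcd(m,3)=1$ and $b(m)\equiv 0\pmod 3$ for $3\mid m$'' is equivalent to $h(z)\equiv 0\pmod{27}$.

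Finally, Sturm's theorem (Theorem \ref{stu:stu}) closes the argument by iteration. Since $3\mid 504$ and $3$ divides every coefficient of $F$, $h\equiv 0\pmod 3$ is automatic and $h/3\in M_{6}(\Gamma_{0}(9))$. Sturm's bound for weight $6$ on $\Gamma_{0}(9)$ is $\frac{6}{12}\cdot[SL_{2}(\Z):\Gamma_{0}(9)]=6$, so verifying directly that enough initial Fourier coefficients of $h$ are divisible by $27$---for example $[q^{2}]h=0$, $[q^{4}]h=-19656=27\cdot(-728)$, $[q^{6}]h=4536=27\cdot 168$---allows one to apply Sturm first to $h/3$ (giving $h\equiv 0\pmod 9$) and then to $h/9$ (giving $h\equiv 0\pmod{27}$). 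The main obstacle is precisely this $3$-adic precision issue: since $9\mid 504$ but $27\nmid 504$, a single mod-$3$ Sturm comparison of $504\,\Delta_{12}$ and $F$ is vacuous, so the iterated bootstrap through three successive powers of $3$ is essential to extract the genuine mod-$3$ information about $b(m)-\sigma_{5}(m)$.
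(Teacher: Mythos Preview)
Your argument is correct and reaches the same conclusion, but the route differs from the paper's. Both proofs begin identically, using Lemma~\ref{lem:lemA1} to reduce to $b(m)\equiv\sigma_{5}(m)\pmod 3$ for $\gcd(m,3)=1$. From there the paper forms $\Delta_{12}(z)+E_{6}(z)/504$ on $\Gamma_{0}(3)$ and then \emph{twists} by the quadratic character $\big(\tfrac{\cdot}{3}\big)$ via Proposition~\ref{kob:kob}; the twist simultaneously kills the non-integral constant term $1/504$ and projects onto $m$ coprime to $3$, landing in $M_{6}(\Gamma_{0}(27))$ with integer coefficients, after which a single application of Sturm mod $3$ (bound $18$, so $19$ terms) finishes. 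You instead clear denominators by multiplying by $504$ and build the prime-to-$3$ Eisenstein piece with $U_{3},V_{3}$, landing on $\Gamma_{0}(9)$; the price is that you must recover a mod-$27$ congruence for $h$, which you do by iterating Sturm. Your auxiliary Frobenius computation $\eta(z)^{3}\equiv\eta(3z)\pmod 3$ showing $b(m)\equiv 0\pmod 3$ for $3\mid m$ is a nice bonus the paper does not isolate. One small efficiency note: since $9\mid 504$ and every coefficient of $F$ is a multiple of $504$, in fact $h\equiv 0\pmod 9$ is already automatic, so you can skip straight to $h/9$ and apply Sturm once; your two-step iteration works but the first step is redundant. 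In summary, the paper trades a slightly larger Sturm bound for a one-shot argument via the twist, while your approach keeps the level lower at the cost of tracking $3$-adic precision.
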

\begin{proof}
By Lemma \ref{lem:lemA1}, we remark that $\Theta _{A_{2}}(z) \equiv 1 \pmod 3$. So, we have $a(m)\equiv b(m) \pmod 3$. Next we consider the following function: 
\begin{align*}
\Delta_{12}(z)+\frac{E_{6}(z)}{504}=\frac{1}{504}+\sum_{m\geq 0}(b(m)-\sigma_{5}(m))q^{2m}=:\sum_{m\geq 0}c(m)q^{2m}. 
\end{align*}
This is the modular form of $\Gamma _{0}(3)$. Now we apply Proposition \ref{kob:kob} and construct a modular form of $\Gamma _{0}(27)$ by
\begin{align*}
\sum_{m\geq 0}\Big(\frac{m}{3}\Big)c(m)q^{2m}=39 q^4  - 1053 q^8 + 3120 q^{10} - 16848 q^{14} + \cdots . 
\end{align*}
One finds that the first $(6/12)[SL_{2}(\Z):\Gamma_{0}(27)]+1=19$ terms are multiple of $3$ which completes the proof by an immediate application of Theorem \ref{stu:stu}. 
\end{proof}

\begin{lem}\label{lem:lemA3}
For $m\equiv 1 \pmod 3$ and $m$ is odd, $a(m)\equiv \sigma_{1}(m) \pmod 2$. 
\end{lem}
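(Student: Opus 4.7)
The argument follows the blueprint of Lemma \ref{lem:lemA2}: reduce the claim to a congruence between Fourier coefficients of two modular forms, twist by a Dirichlet character to isolate the relevant residue class, and verify the congruence on finitely many initial coefficients via Sturm's theorem.

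\emph{Reduction to $\Delta_{12}$.} By Lemma \ref{lem:lemA1}, every coefficient $N(m)$ of $\Theta_{A_{2}}(z)$ with $m \geq 1$ is a multiple of $6$, hence of $2$, so $\Theta_{A_{2}}(z) \equiv 1 \pmod{2}$. Multiplying by $\Delta_{12}(z)$ gives $\Theta_{A_{2},P}(z) \equiv \Delta_{12}(z) \pmod{2}$ and therefore $a(m) \equiv b(m) \pmod{2}$ for every $m \geq 1$. It suffices to show $b(m) \equiv \sigma_{1}(m) \pmod{2}$ whenever $m$ is odd with $m \equiv 1 \pmod{3}$.

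\emph{Construction of a comparison form.} Set
\[
g(z) := \tfrac{1}{16}\,\theta_{2}(2z)^{4}\,E_{4}(z),
\]
a modular form of weight $2 + 4 = 6$ on a suitable $\Gamma_{0}(N)$. By Proposition \ref{hir:hir} applied after $z \mapsto 2z$,
\[
\tfrac{1}{16}\theta_{2}(2z)^{4} = \sum_{\substack{m \geq 1\\ m\ \mathrm{odd}}} \sigma_{1}(m)\,q^{2m},
\]
and $E_{4}(z) \equiv 1 \pmod{2}$ since every non-constant Fourier coefficient of $E_{4}$ is a multiple of $240$. Consequently $g(z) \equiv \sum_{m\ \mathrm{odd}} \sigma_{1}(m)\,q^{2m} \pmod{2}$. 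Form the difference $h(z) := \Delta_{12}(z) - g(z)$, a weight-$6$ modular form whose $q^{2m}$-coefficient reduces modulo $2$ to $b(m) - \sigma_{1}(m)$ when $m$ is odd and to $b(m)$ when $m$ is even.

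\emph{Twist and Sturm's bound.} By Proposition \ref{kob:kob}, the twist $h_{\chi_{3}}(z)$ by the Legendre symbol $\chi_{3} = \bigl(\tfrac{\cdot}{3}\bigr)$ lies in a modular-forms space on $\Gamma_{0}(9N)$, with $q^{2m}$-coefficient equal to $\chi_{3}(m)$ times the corresponding coefficient of $h$. Apply Theorem \ref{stu:stu}: verify by direct calculation that the first $\tfrac{6}{12}[SL_{2}(\Z):\Gamma_{0}(9N)] + 1$ Fourier coefficients of $h_{\chi_{3}}$ are divisible by $2$, which forces $h_{\chi_{3}} \equiv 0 \pmod{2}$. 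Since $\chi_{3}(m) \in \{\pm 1\}$ for $m$ coprime to $3$, this yields $b(m) \equiv \sigma_{1}(m) \pmod{2}$ for every such $m$ that is odd (and $b(m) \equiv 0 \pmod{2}$ for every such $m$ that is even). Specializing to $m$ odd with $m \equiv 1 \pmod{3}$ completes the proof.

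\emph{Main obstacle.} The substantive technical work is bookkeeping: specifying the exact level $N$ and character on which $g$ is modular, propagating this data through the $\chi_{3}$-twist, and carrying out the finite but lengthy Sturm-bound verification. Once these are in place, the argument parallels the proof of Lemma \ref{lem:lemA2} line by line.
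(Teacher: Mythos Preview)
Your proposal is correct and follows essentially the same route as the paper: reduce $a(m)$ to $b(m)$ via $\Theta_{A_2}\equiv 1\pmod 2$, build a weight-$6$ comparison form congruent to $\tfrac{1}{16}\theta_2(2z)^4$ modulo $2$, subtract from $\Delta_{12}$, twist by $(\tfrac{\cdot}{3})$, and finish with Sturm's bound. The only cosmetic difference is your choice of the ``$\equiv 1\pmod 2$'' factor: you multiply $\tfrac{1}{16}\theta_2(2z)^4$ by $E_4(z)$, whereas the paper multiplies it by $\theta_3(2z)^4\theta_4(2z)^4$ (so that the comparison form is $\eta(2z)^{12}$); both are weight-$6$ forms on $\Gamma_0(4)$, so the difference $h$ lives on $\Gamma_0(12)$, the twist lands on $\Gamma_0(108)$, and the Sturm bound to check is $109$ in either case.
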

\begin{proof}
By Lemma \ref{lem:lemA1}, we remark that $\Theta _{A_{2}}(z) \equiv \theta _{3}^4(z) \equiv \theta _{4}^4(z) \equiv  1 \pmod 2$. So, we have $a(m)\equiv b(m) \pmod 2$ and we take the following function: 
\begin{align*}
\sum_{m\geq 1}c(m)q^{2m}:=\frac{1}{16}\theta _{3}^4(2z) \theta _{4}^4(2z) \theta _{2}^4(2z)&\equiv \frac{1}{16}\theta _{2}^4(2z) \pmod 2 \\
&\equiv \sum_{m\geq 1}\sigma _{1}(2m-1)q^{2(2m-1)} \pmod 2. 
\end{align*}
This is a modular form of $\Gamma _{0}(4)$, \cite{3}. 
So, for $m\equiv 1 \pmod 2$, $c(m)\equiv \sigma _{1}(m)$. Namely, 
it is enough to show that for $m\equiv 1 \pmod 2$, $b(m)\equiv c(m)$. 

Next we consider the following function: 
\begin{align*}
\sum_{m\geq 1}d(m)q^{2m}&:=\Delta_{12}(z)-\frac{1}{16}\theta_{3}^4(2z)\theta_{4}^4(2z)\theta_{2}^4(2z)\\ &\equiv \Delta_{12}(z)-\sum_{m\geq 1}\sigma _{1}(2m-1)q^{2(2m-1)} \pmod 2 
\end{align*}
This is the modular form of $\Gamma _{0}(12)$. Now we apply Proposition \ref{kob:kob} and construct a modular form of $\Gamma _{0}(108)$ by
\begin{align*}
\sum_{m\geq 1}\Big(\frac{m}{3}\Big)d(m)q^{2m}=6 q^4 + 4 q^8 -48 q^{14} - 168 q^{16} - 36 q^{20} + \cdots . 
\end{align*}
One finds that the first $(6/12)[SL_{2}(\Z):\Gamma_{0}(108)]+1=109$ terms are multiple of $2$ which completes the proof by an immediate application of Theorem \ref{stu:stu}. 
\end{proof}

\begin{proof}[Proof of Theorem \ref{thm:4.1}]
We will show that the $a(m)\neq 0$ when $(A_{2})_{m}\neq \emptyset$. Assume that $m$ is a power of prime, if not we could apply (\ref{eqn:A1}). We will divide into the three cases. \\
(i) Case $m=3^{\alpha}$: \\
We consider the equation (\ref{eqn:A2}). 
\begin{align*}
a(3^{n+1})&=a(3)a(3^{n}). 
\end{align*}
Hence we have $a(3^{\alpha})\neq 0$, for $a(3)=-27$. \\
(ii) Case $m=p^{\alpha}$, $p\equiv 2 \pmod 3$: \\
By Lemma \ref{lem:lemA1}, $a(p^n)=0$ if $n$ is odd number. Then, equation (\ref{eqn:A2}) can be written as follows: 
\begin{align*}
a(p^{n+1})= p^6a(p^{n-1}). 
\end{align*}
Thus we get 
\begin{align*}
a(p^{n})= 
\left\{
\begin{array}{lll}
0 \quad &{\rm if}\ &n\ is\ odd, \\
p^{6(n-1)} \quad &{\rm if}\ &n\ is\ even. 
\end{array} 
\right.
\end{align*}
Hence we have $a(p^{\alpha})\neq 0$ when $(A_{2})_{m}\neq \emptyset$. \\
(iii) Case $m=p^{\alpha}$, $p\equiv 1 \pmod 3$: 
First of all, we show the following proposition. 
\begin{prop}\label{prop:4.1}
Let $\alpha_0$ be the least value of $\alpha$ for which $a{\rm (}p^{\alpha}{\rm )}=0$. Then $\alpha_0 =1$. 
\end{prop}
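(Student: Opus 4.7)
The plan is to follow the template of Proposition \ref{prop:3.1} line by line, substituting the weight-$7$ parameters for the weight-$5$ ones, and replacing the mod $4$ analysis at the end by the parity analysis provided by Lemma \ref{lem:lemA3}.

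First I would assume for contradiction that $\alpha_{0} > 1$, so that $a(p) \neq 0$ while $a(p^{\alpha_{0}}) = 0$. Equation (\ref{eqn:A4}) then forces $\sin((\alpha_{0}+1)\theta_{p}) = 0$, whence $\theta_{p} = \pi k / (1+\alpha_{0})$ for some integer $k$. Setting $z := 2\cos\theta_{p} = a(p)\, p^{-3}$, this number is at once a rational (visibly) and an algebraic integer (as twice the cosine of a rational multiple of $\pi$); equivalently, $z$ satisfies the quadratic $p^{6}z^{2} - a(p)^{2} = 0$, so $z^{2}$ is a rational algebraic integer, i.e.\ a rational integer.

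Next I would invoke the Deligne bound (\ref{eqn:A3}) to conclude $|z| < 2$; combined with $z \neq 0$ (because $a(p) \neq 0$), this yields $z^{2} \in \{1,2,3\}$. The cases $z^{2} = 2$ and $z^{2} = 3$ would give $a(p)^{2} = 2p^{6}$ and $a(p)^{2} = 3p^{6}$ respectively, neither of which is a perfect square, contradicting $a(p) \in \mathbb{Z}$. The remaining case $z^{2} = 1$ forces $a(p) = \pm p^{3}$; since $p$ is a prime with $p \equiv 1 \pmod 3$, we have $p$ odd, so Lemma \ref{lem:lemA3} applies to $m = p$ and yields $a(p) \equiv \sigma_{1}(p) = p + 1 \equiv 0 \pmod 2$. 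But $\pm p^{3}$ is odd, a contradiction, which proves $\alpha_{0} = 1$.

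The main obstacle is the final congruence step. The naive attempt, mirroring Proposition \ref{prop:3.1} verbatim, would use the mod $3$ congruence of Lemma \ref{lem:lemA2}, giving $a(p) \equiv \sigma_{5}(p) = 1 + p^{5} \equiv 2 \pmod 3$; this rules out $a(p) = +p^{3}$ (since $p^{3} \equiv 1 \pmod 3$) but is perfectly consistent with $a(p) = -p^{3}$ (since $-1 \equiv 2 \pmod 3$). So mod $3$ alone is not enough, and one must bring in Lemma \ref{lem:lemA3}, which is precisely why the paper proves the mod $2$ congruence under the side hypothesis $m \equiv 1 \pmod 3$: it is exactly what is needed to close off the sign ambiguity in the $z^{2} = 1$ case.
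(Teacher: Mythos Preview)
Your proposal is correct and follows essentially the same route as the paper: assume $\alpha_0>1$, use (\ref{eqn:A4}) to make $\theta_p$ a rational multiple of $\pi$, deduce that $z=a(p)p^{-3}$ is a rational integer, use Deligne's bound to force $z^2\in\{1,2,3\}$, discard $z^2=2,3$ by non-squareness, and kill $a(p)=\pm p^3$ with the congruence lemmas. The only difference is cosmetic: the paper splits the $z^2=1$ case by residue mod $3$ (using Lemma~\ref{lem:lemA2} to exclude $+p^3$ and then Lemma~\ref{lem:lemA3} to exclude $-p^3$), whereas you observe directly that Lemma~\ref{lem:lemA3} alone already rules out both signs since $\pm p^3$ is odd --- a slight streamlining of the same argument.
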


\begin{proof}
Assuming the contrary, that is, $\alpha_0 > 1$, so that $a(p)\neq 0$. By (\ref{eqn:A3}), 
\begin{align*}
a(p^{\alpha_0})=0=p^{2\alpha_0}\frac{\sin (\alpha +1)\theta _{p}}{\sin \theta _{p}}. 
\end{align*}
This shows that $\theta _{p}$ is a real number of the form $\theta _{p}=\pi k / (1+\alpha_0)$, where k is an integer. Now the number 
\begin{align}
z=2 \cos \theta _{p}=a(p) p^{-3}, \label{eqn:A5}
\end{align}
being twice the cosine of a rational multiple $2 \pi$, is an algebraic integer. On the other hand $z$ is a root of the obviously irreducible quadratic 
\begin{align}
p^{6}z^{2}-a^{2}(p)=0.  \label{eqn:A6}
\end{align}
Hence $z$ is a rational integer. By (\ref{eqn:A3}) and (\ref{eqn:A5}), we have $z^{2}\leq 3$. Therefore $z^{2}=1, 2 $ and $ 3$. If z=2 (resp. 3), (\ref{eqn:A6}) become $a^{2}(2)= 2 p^{6}$ {\rm (resp.} $a^{2}(3)= 3 p^{6}$$)$. These are impossible because the right hand sides are not square. 
If $z^{2}=1$, we have 
\begin{align}
a(p)=\pm p^{3}. \label{eqn:A9} 
\end{align}
By Lemma \ref{lem:lemA2} and \ref{lem:lemA3}, 
\begin{align}
a(p)&\equiv \sigma_{5}(p) \pmod 3 \nonumber \\ 
&\equiv p^5+1 \pmod 3 \nonumber \\
&\equiv 2 \pmod 3 , \label{eqn:A10} \\
a(p)&\equiv \sigma_{1}(p) \pmod 2 \nonumber \\ 
&\equiv p+1 \pmod 2 \nonumber \\
&\equiv 0 \pmod 2 . \label{eqn:A11}
\end{align}
By (\ref{eqn:A9}), $a(p)\equiv 1 $ or $ 2 \pmod 3$. In the first case $a(p)\equiv 1 \pmod 3$, this is a contradiction to (\ref{eqn:A10}). In the second case $a(p)\equiv 2 \pmod 3$, this is a contradiction to (\ref{eqn:A11}). So, the proof is completed. 
\end{proof}
So it is enough to consider the case when $m$ is a prime. 
By (\ref{eqn:A10}), $a(p)\equiv 2 \pmod 3$, so, we have $a(p)\neq 0$. This completes the proof of Theorem \ref{thm:4.1}. 

\end{proof}

\section{Concluding Remarks}
(1) In the last part of the proof of Theorem \ref{thm:3.1} (resp. Theorem \ref{thm:4.1}), after we obtain Proposition \ref{prop:3.1} (resp. Proposition \ref{prop:4.1}), we can directly show that the shells 
$(\Z^2)_p$ (resp. $(A_2)_{2p}$) are not spherical $5$-designs (resp. $7$-designs).
This gives an alternative approach to the proof of Theorem \ref{thm:3.1} 
(resp. Theorem \ref{thm:4.1}). \\

(2) It is interesting to note that no spherical $12$-design among the shells 
of any Euclidean lattice (of any dimension) is known. It is an interesting open problem 
to prove or disprove whether these exists any $12$-design which is a shell of a Euclidean lattice. \\

(3) Responding to the author's request, Junichi Shigezumi performed computer 
calculations to determine whether there are spherical $t$-designs for bigger $t$, in the $2$ 
and $3$ dimensional cases. His calculation shows that among the shells of 
integral lattices in dimension $2$ (with relatively small discriminant 
and small norms), there are only $5$-designs. 
That is, no $6$-designs were found. (So far, all examples of such 5-designs 
are vertices of a regular $6$-gon, although they are the shells 
of many different lattices). In the $3$ dimensional case, 
all examples are only $3$-designs. No $4$-designs which are shells 
of a lattice were found. It is an interesting open problem 
whether this is true in general for dimensions $2$ and $3$. 

\begin{center}
{\bf Acknowledgment}
\end{center}
The second author is supported by JSPS Research Fellowship. The authors thank Junichi Shigezumi and Hiroshi Nozaki for their helpful discussions and computations on this research. 

\begin{appendix}

\end{appendix}

\end{document}